\numberwithin{equation}{section}
\newcounter{stepctr}
{\end{list}}
\newtheorem{thm}{Theorem}[section]
\newtheorem{cor}[thm]{Corollary}
\theoremstyle{definition}
\newtheorem{dfn}[thm]{Definition}
\newtheorem{ex}[thm]{Example}
\newtheorem{exs}[thm]{Examples}
\newtheorem{rema}[thm]{Remark}
\newtheorem{prob*}{Open problem}
\newcommand{\demo}{\begin{proof}}
\newcommand{\N}{\mathbb{N}}
\newcommand{\C}{\mathbb{C}}
\newcommand{\nullset}{\emptyset}
\newcommand{\cit}{\mathbb{C}}
\def\ll^2{{\mathcal L}(\ell^2(\N))}
\title
{\bf Abstract   Weyl-type  Theorems  }
\author{ M. Berkani}
\date{ }
\begin{document}

\maketitle \thispagestyle{empty}

\begin{abstract}\noindent\baselineskip=15pt
In this paper, we give a new approach for the study of Weyl-type theorems. Precisely we introduce the concepts of spectral valued   and spectral partitioning  functions. Using two  natural order relations on the set of spectral valued functions,  we reduce the question of relationship between Weyl-type theorems to the study of the set difference between the parts of the spectrum that are  involved. This study solves completely the question of relationship between  two spectral valued functions, comparable for one or the other order relation. Then several known results  about Weyl-type theorems becomes  corollaries of the results obtained.

\end{abstract}
 \baselineskip=15pt
 \footnotetext{\small \noindent  2010 AMS subject
classification: Primary 47A53, 47A10, 47A11 \\
\noindent Keywords: Spectral valued function, Partitioning, Spectrum, Weyl-type theorem. } \baselineskip=15pt

\section{Introduction}

Let $X$ be a Banach space, and let  $L(X)$ be the Banach algebra
 of all bounded linear operators acting on $X.$
 For $T\in L(X),$ we will denote by $N(T)$ the null space of $T$, by $\alpha(T)$ the nullity of $T$,
by $R(T)$ the range of $T$, by $\beta(T)$ its defect and by $T^*$ the adjoint of $T.$ We will denote also by $\sigma(T)$
the spectrum of $T$ and  by $\sigma_a(T)$ the approximate point spectrum of $T.$
If the range $R(T)$ of $T$ is closed and
$\alpha(T)<\infty$ (resp. $\beta(T)<\infty),$ then $T$ is called an upper
semi-Fredholm (resp. a lower semi-Fredholm) operator. If $T\in L(X)$ is either upper or
lower semi-Fredholm, then $T$ is called a semi-Fredholm operator, and the index of $T$ is defined by
$\mbox{ind}(T)=\alpha(T)-\beta(T)$. If both of $\alpha(T)$ and $\beta(T)$ are finite, then $T$ is called a Fredholm
operator. An operator $T\in L(X)$ is called a Weyl operator if it is a Fredholm operator of index zero. The Weyl spectrum $\sigma_{W}(T)$ of $T$ is
defined by $\sigma_{W}(T)= \{ \lambda \in \mathbb{C}\mid T-\lambda I$ is  not a Weyl operator\}.

For a bounded linear operator $T$ and a nonnegative integer $n,$ define $T_{[n]}$
to be the restriction of $T$ to $R(T^n),$ viewed as
 a  map from $R(T^n)$ into $R(T^n)$ (in particular $T_{[0]}=T$). If for some integer
 $n,$ the range space $R(T^n)$ is closed and $T_{[n]}$ is an upper (resp.
 a lower) semi-Fredholm operator, then $T$ is called an upper (resp. a lower)
 semi-B-Fredholm operator. A semi-B- Fredholm operator $T$  is an upper or a lower
 semi-B-Fredholm operator, and in  this case the index of
 $T$ is defined as the index of the semi-Fredholm operator $T_{[n]},$ see \cite{BS}. Moreover, if
 $T_{[n]}$ is a Fredholm operator, then $T$ is called a B-Fredholm operator, see \cite{BE1}.
 An operator $T\in L(X)$ is said to be a B-Weyl
 operator \cite {BW}, if it is a B-Fredholm operator of index zero. The B-Weyl spectrum $\sigma_{BW}(T)$ of $T$ is
defined by $\sigma_{BW}(T)= \{ \lambda \in \mathbb{C}\mid T-\lambda I$ is  not a B-Weyl operator\}.

The  ascent  $a(T)$ of an operator $T$ is defined by
 $a(T)=\mbox{inf} \{ n\in \mathbb{N} \mid N(T^n)=N(T^{n+1})\},$
and the  descent   $ \delta(T)$ of $T$, is defined by
 $\delta(T)= \mbox{inf} \{ n \in \mathbb{N} \mid R(T^n)= R(T^{n+1})\},$ with $ \mbox{inf}\, \emptyset= \infty.$

  According to \cite {H}, a complex number $ \lambda$ is a pole of the
resolvent of $T$ if and only if \,\,$ 0< $ max $(a(T- \lambda I),
\delta(T- \lambda I))< \infty.$ Moreover, if this is true, then $a(T-
\lambda I)= \delta(T- \lambda I).$ An operator $T$ is called Drazin invertible if $0$ is a pole of $T.$ The Drazin spectrum $\sigma_{D}(T)$ of $T$ is defined by $\sigma_{D}(T)=\{\lambda\in\C : T-\lambda I \, \,\text{is not  Drazin invertible}\}.$

 Define also the set  $LD(X)$ by $LD(X)=\{T\in L(X) : a(T)<\infty
\mbox{ and } R(T^{a(T)+1}) \mbox{ is closed}\}$ and
 $\sigma_{LD}(T)=\{\lambda\in\C : T-\lambda I\not\in LD(X)\}$ be the left Drazin spectrum. Following  \cite{BK},  an
 operator $T\in L(X)$ is said to be left Drazin invertible if $T\in
 LD(X)$. We say that $\lambda\in \sigma_a(T)$ is a left pole of $T$
if $T-\lambda I\in LD(X)$, and that $\lambda\in\sigma_a(T)$ is a
left pole of $T$ of finite rank if $\lambda$ is a left pole of $T$
and $\alpha(T-\lambda I)<\infty$.

Let $SF_{+}(X)$ be the class of all upper semi-Fredholm
operators and  $ SF_{+}^{-}(X)=\{T\in SF_{+}(X) : \mbox{ind}(T)\leq
0\}.$ The upper semi-Weyl  spectrum
$\sigma_{SF_+^-}(T)$ of $T$ is defined by
$\sigma_{SF_{+}^{-}}(T)=\{\lambda\in\cit : T-\lambda I \not\in
SF_{+}^{-}(X)\}.$  Similarly is defined the upper semi-B-Weyl spectrum
$\sigma_{SBF_+^-}(T)$ of $T.$

 An operator $T\in L(X)$ is called  upper semi-Browder if
it is upper semi-Fredholm operator of finite ascent, and is called
 Browder if it is a  Fredholm operator of finite ascent and descent.
The upper semi-Browder spectrum $\sigma_{uB}(T)$ of $T$ is
defined by $\sigma_{uB}(T)=\{\lambda\in \C : T-\lambda I \mbox { is
not upper semi-Browder}\}$, and the   Browder spectrum $\sigma_{B}(T)$ of $T$ is defined by
$\sigma_B(T)=\{\lambda\in \C : T-\lambda I \mbox { is not
Browder}\}$.

\vspace{1mm}
Below, we give a list of  symbols and notations we will use:
\vspace{1mm}

\noindent $E(T):$    eigenvalues of $T$ that are isolated in the spectrum $\sigma(T)$ of $T ,$\\
$E^0(T):$  eigenvalues of $T$ of finite multiplicity that are isolated in the spectrum
$\sigma(T)$ \, of $T,$\\
$E_a(T):$  eigenvalues of $T$ that  are isolated in the approximate point spectrum $\sigma_a(T)$  of  $T,$\\
$E_a^0(T):$  eigenvalues of $T$ of finite multiplicity that are isolated in the spectrum $\sigma_a(T)$  of  $T,$\\
$\Pi(T):$  poles of $T,$\\
$\Pi^0(T):$  poles of $T$ of finite rank,\\
$\Pi_a(T):$ left poles of $T,$\\
$\Pi_a^0(T):$ left poles of $T$ of finite rank,\\
$\sigma_{B}(T):$  Browder spectrum of $T,$ \\
$\sigma_{D}(T):$  Drazin spectrum of $T,$ \\
$\sigma_{LD}(T)$: left Drazin spectrum of $T.$\\
$\sigma_{uB}(T):$ upper semi-Browder spectrum of $T,$ \\
$\sigma_{BW}(T):$ B-Weyl spectrum of $T,$ \\
$\sigma_{W}(T):$  Weyl spectrum of $T,$ \\
$\sigma_{SF_+^-}(T):$ upper semi-Weyl  spectrum of $T,$ \\
$\sigma_{SBF_+^-}(T):$ upper semi-B-Weyl spectrum of $T,$ \\

Hereafter, the symbol $\bigsqcup$ stands for disjoint union, while $iso(A)$ and  $acc(A)$ means respectively isolated points and accumulation points of a given subset $A$ of $\mathbb{C}.$

 After the first step of this  introduction, we define in  the second  section of this paper the concepts of spectral valued functions and spectral partitioning functions. They are functions defined on the Banach algebra $L(X)$ and valued into $ \mathcal{P}(\mathbb{C})\times \mathcal{P}(\mathbb{C}),$ where  $\mathcal{P}(\mathbb{C})$ is the set of the  subsets of $\mathbb{C}.$   A spectral valued function $\Phi=(\Phi_1, \Phi_2)$ is a spectral partitioning, respectively a spectral a-partitioning,  valued function for an operator $T \in L(X)$ if $ \sigma(T)= \Phi_1(T)\bigsqcup \Phi_2(T), $  respectively if $ \sigma_a(T)= \Phi_1(T)\bigsqcup \Phi_2(T).$  Recall  that from \cite{WL}, if $T$ is a  normal operator acting on a Hilbert space, then $ \sigma(T)= \sigma_W(T) \bigsqcup E(T).$  Thus  a spectral valued function $\Phi= (\Phi_1, \Phi_2)$ could be  considered as  an "Abstract Weyl-type theorem," and an  operator $ T \in L(X)$ satisfies the abstract Weyl-type theorem $\Phi,$ if $\Phi$ is a spectral  partitioning or a-partitioning  function for $T.$\\
  Our main goal here is the study of abstract Weyl-type theorems and their relationship. By the study of relationship between two given abstract Weyl-type theorems  $\Phi$ and $\Psi$ we mean the answer of the  following question:  If an operator $T \in L(X)$ satisfies one of the two abstract Weyl-type theorems  $\Phi$ and $\Psi$, does $T$  satisfies the other one? The two abstract Weyl-type theorems  $\Phi$ and $\Psi$ are said to be equivalent if $T \in L(X)$ satisfies one of the two abstract Weyl-type theorems  $\Phi$ and $\Psi$   if and only $T$ satisfies the other one.
 To study the relationship between abstract Weyl-type theorems, we introduce  two  order relations  $\leq $  and $<<$  on the set  of  spectral valued functions. Then  the question of relationship between two comparable  spectral valued functions for the order $ \leq$  is solved in terms of set difference between  parts of the spectrum that are involved.   In the third section, following the same steps as in the second section,  we consider spectral a-partitioning functions  and we obtain similar results to those of the second section.

 In the forth section, we give some crossed results by considering  two spectral valued functions  comparable for the order $\leq$ , one partitioning  the spectrum  and the other  one partitioning  the approximate point spectrum. We obtain new kind of results, where the set difference $\sigma(T) \setminus \sigma_a(T)$ plays a crucial role. At the end of this section, we study the case of two  comparable  spectral  valued  functions for the order relation $<<$, and we answer in Theorem \ref{thm45} and Theorem \ref{thm46} the question of relationship between the two spectral valued functions.  \\
Globally, This study solves completely the question of relationship between two comparable spectral valued functions,  and several known results  about Weyl-type theorems appearing in recent literature  becomes  corollaries of the results obtained. To illustrate this,  we will give through the different sections, several examples as an application of the results obtained,  linking them to original references where they have been first  established.

As mentioned before, the original idea leading to a partition of the spectrum goes back to the famous paper by H. Weyl \cite{WL}. More recently, several authors worldwide had worked in this direction,  see for example \cite{AP}, \cite{BK}, \cite{CH}, \cite{DD}, \cite{DH}, \cite{DU},  \cite{RA} and \cite{XH}.

\vspace{5pt}

\section{ Partitioning  functions for the spectrum}

In this section we study  the relationship between  two  comparable  spectral valued  functions, when one of them is spectral partitioning and the other one would be also   spectral partitioning.

\begin{dfn} A spectral valued  function  is a function   $\Phi= (\Phi_1, \Phi_2) : L(X) \rightarrow  \mathcal{P}(\mathbb{C})\times \mathcal{P}(\mathbb{C})$ such that $ \,\, \forall \,\,  T \in L(X),  \Phi(T) \subset \sigma(T) \times  \sigma(T),$ where  $\mathcal{P}(\mathbb{C})$ is the set of the subsets of $\mathbb{C}.$

\end{dfn}

\begin{dfn} Let  $\Phi$ be a spectral valued  function.  We will  say that  $\Phi= (\Phi_1, \Phi_2)$ is a  spectral partitioning  function for an operator $T \in L(X),$ if    $\sigma(T)= \Phi_1(T) \bigsqcup \Phi_2(T).$
\end{dfn}

 A spectral valued function  $\Phi= (\Phi_1, \Phi_2)$ could be  considered as  an "Abstract Weyl-type theorem." An operator $ T \in L(X)$ satisfies the abstract Weyl-type theorem $\Phi$ if $\Phi$ is a spectral  partitioning  function for $T.$

\begin{ex}

\begin{itemize}
\item Let $\Phi_W(T)= (\sigma_W(T), E^0(T)), \,\, \forall \,\,T \in L(X).$ From \cite{WL}, it follows that $\Phi_W$ is a partitioning function for each normal operator acting on a Hilbert space.
\item Let $\Phi_{BW}(T)= (\sigma_{BW}(T), E(T)),\,\,  \forall \,\,T \in L(X).$ From \cite{BI}, it follows that $\Phi_{BW}$ is a partitioning function for each normal operator acting on a Hilbert space.

\end{itemize}

\end{ex}

\begin{dfn}

Let $\Psi$ and $\Phi$ be two spectral valued  functions.  We will say that $\Psi  \leq \Phi,$ if $\, \forall \, T \in L(X),$ we have
$\Phi_1(T) \subset \Psi_1(T)$ and $\Psi_2(T) \subset \Phi_2(T).$ We will say that  $\Psi << \Phi,$ if  \,\, $ \forall \, T \in L(X),$ we have $\Psi_1(T) \subset \Phi_1(T)$ and $\Psi_2(T) \subset \Phi_2(T).$

\end{dfn}

 It's easily seen that both   $\leq$ and $<<$  are order relations  on the set of spectral valued functions.

\begin{thm} \label{thm21} Let $T \in L(X)$ and let $\Phi$ be a spectral partitioning function for $T.$ If $\Psi$ is a spectral valued function such that $\Psi \leq \Phi,$   then $\Psi$ is  a spectral partitioning function for  $T$ if and only if $ \Psi_1(T) \setminus \Phi_1(T) = \Phi_2(T) \setminus \Psi_2(T).$
\end{thm}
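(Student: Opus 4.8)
The plan is to treat this as a purely set-theoretic identity, since all four sets $\Phi_1(T),\Phi_2(T),\Psi_1(T),\Psi_2(T)$ live inside $\sigma(T)$ and the hypotheses hand us precise containments among them. Fix $T$ and abbreviate $\sigma=\sigma(T)$, $A_1=\Phi_1(T)$, $A_2=\Phi_2(T)$, $B_1=\Psi_1(T)$, $B_2=\Psi_2(T)$. From the hypothesis that $\Phi$ is partitioning for $T$ we have $\sigma=A_1\sqcup A_2$, hence $A_2=\sigma\setminus A_1$ and $A_1\cap A_2=\emptyset$; from $\Psi\leq\Phi$ we have $A_1\subset B_1$ and $B_2\subset A_2$. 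The goal is then to show that $\sigma=B_1\sqcup B_2$ if and only if $B_1\setminus A_1 = A_2\setminus B_2$.

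First I would introduce the two relevant set differences $C:=B_1\setminus A_1$ and $D:=A_2\setminus B_2$; the claimed criterion is exactly $C=D$. Because $B_1\subset\sigma=A_1\sqcup A_2$, every point of $C$ lies in $A_2$, so $C\subset A_2$; and since $B_2\subset A_2$ we may write $A_2=B_2\sqcup D$. Thus both $C$ and $D$ are subsets of $A_2$, and $C$ itself splits as $C=(C\cap B_2)\sqcup(C\cap D)$. Likewise $B_1=A_1\sqcup C$, since $A_1\subset B_1$ and $C\subset A_2$ is disjoint from $A_1$.

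The heart of the argument is to decouple the partitioning condition $\sigma=B_1\sqcup B_2$ into its two defining halves, coverage and disjointness, and to match each to one inclusion between $C$ and $D$. For coverage, using $B_1=A_1\sqcup C$ one computes $\sigma\setminus B_1 = A_2\setminus C$, whence $\sigma\setminus(B_1\cup B_2) = (A_2\setminus C)\setminus B_2 = D\setminus C$; so $B_1\cup B_2=\sigma$ if and only if $D\subset C$. For disjointness, since $B_2\subset A_2$ is disjoint from $A_1$, we get $B_1\cap B_2 = (A_1\sqcup C)\cap B_2 = C\cap B_2$, and by the splitting $C=(C\cap B_2)\sqcup(C\cap D)$ this is empty if and only if $C\subset D$. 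Combining the two equivalences, $\Psi$ is partitioning for $T$ if and only if $D\subset C$ and $C\subset D$, i.e. if and only if $C=D$, which is precisely $B_1\setminus A_1 = A_2\setminus B_2$, that is $\Psi_1(T)\setminus\Phi_1(T)=\Phi_2(T)\setminus\Psi_2(T)$.

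I do not expect a genuine obstacle here, as the content is elementary set theory. The only point that needs care is the bookkeeping, keeping straight which inclusions come from $\Psi\leq\Phi$ and which from $\Phi$ being a partition; and the one idea that keeps the argument clean is to separate ``$B_1\cup B_2=\sigma$'' from ``$B_1\cap B_2=\emptyset$'' and to recognize that these correspond respectively to the two inclusions $D\subset C$ and $C\subset D$.
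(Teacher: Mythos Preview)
Your proof is correct and follows essentially the same set-theoretic approach as the paper, with a somewhat cleaner organization: whereas the paper treats the two implications separately (the forward direction in one terse line, the converse by first establishing coverage and then disjointness), you unify both directions by matching coverage $B_1\cup B_2=\sigma$ to the inclusion $D\subset C$ and disjointness $B_1\cap B_2=\emptyset$ to $C\subset D$, so that the ``if and only if'' falls out symmetrically.
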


\begin{proof}

Assume that $\Psi$ is a spectral partitioning function for  $T,$ then $\sigma(T)= \Psi_1(T) \bigsqcup \Psi_2(T)= \Phi_1(T) \bigsqcup \Phi_2(T).$
Hence $ \Psi_1(T) \setminus \Phi_1(T) = \Phi_2(T) \setminus \Psi_2(T).$\\
 Conversely assume that $ \Psi_1(T) \setminus \Phi_1(T) = \Phi_2(T) \setminus \Psi_2(T).$ Since $\Phi$ is a spectral partitioning function for $T,$ then $\sigma(T)= \Phi_1(T) \bigsqcup \Phi_2(T).$ As $\Psi \leq \Phi,$ then
$\Phi_1(T) \subset \Psi_1(T)$ and so
$\sigma(T)= \Psi_1(T) \cup \Phi_2(T)= \Psi_1(T) \cup (\Phi_2(T) \setminus \Psi_2(T)) \cup \Psi_2(T) .$  As  $ \Psi_1(T) \setminus \Phi_1(T) = \Phi_2(T) \setminus \Psi_2(T),$ then $ \Phi_2(T) \setminus \Psi_2(T) \subset \Psi_1(T).$ Hence $\sigma(T) \subset  \Psi_1(T) \cup \Psi_2(T).$ As we have always $ \Psi_1(T) \cup \Psi_2(T) \subset \sigma(T),$ then $\sigma(T) =  \Psi_1(T) \cup \Psi_2(T).$   Moreover we have   $ \Psi_1(T) \cap \Psi_2(T) = [\Phi_1(T)\cup (\Psi_1(T)\setminus \Phi_1(T))]  \cap \Psi_2(T)= [\Phi_1(T) \cap \Psi_2(T)] \cup [(\Psi_1(T)\setminus \Phi_1(T)) \cap \Psi_2(T)]= [\Phi_1(T) \cap \Psi_2(T)] \cup [(\Phi_2(T)\setminus \Psi_2(T)) \cap \Psi_2(T)]= \emptyset.$  Hence
$\sigma(T)= \Psi_1(T) \bigsqcup \Psi_2(T) $ and $\Psi$ is a spectral partitioning function for $T.$ \end{proof}

In the following corollary, as an application of Theorem \ref{thm21},  we give a direct proof of \cite [Theorem 3.9]{BK}

\begin{cor} \label{cor21}If $\Phi_{BW}$ is a spectral  partitioning function for $T,$ then $\Phi_{W}$ is also a spectral partitioning function for $T.$ \end{cor}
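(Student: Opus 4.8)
The plan is to apply Theorem \ref{thm21} with $\Phi = \Phi_{BW}$ (the function assumed to partition $\sigma(T)$) and $\Psi = \Phi_W$, so that the desired conclusion ``$\Phi_W$ is partitioning for $T$'' will follow once two things are verified: that $\Phi_W \leq \Phi_{BW}$, and that the set-difference identity $\sigma_W(T) \setminus \sigma_{BW}(T) = E(T) \setminus E^0(T)$ holds.

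First I would check the order relation $\Phi_W \leq \Phi_{BW}$. By the definition of $\leq$ this amounts to the two inclusions $(\Phi_{BW})_1(T) = \sigma_{BW}(T) \subset \sigma_W(T) = (\Phi_W)_1(T)$ and $(\Phi_W)_2(T) = E^0(T) \subset E(T) = (\Phi_{BW})_2(T)$. The second inclusion is immediate from the definitions of $E^0(T)$ and $E(T)$. The first is the standard observation that every Weyl operator is a B-Weyl operator (take $n=0$ in the definition of a B-Fredholm operator), whence $T-\lambda I$ Weyl forces $T-\lambda I$ B-Weyl, that is, $\sigma_{BW}(T) \subset \sigma_W(T)$.

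The heart of the matter is the set-difference identity, and this is where I expect the real work to lie. Using the hypothesis $\sigma(T) = \sigma_{BW}(T) \bigsqcup E(T)$, I would first show $\sigma_W(T) \setminus \sigma_{BW}(T) \subset E(T)$: if $\lambda \notin \sigma_{BW}(T)$ then either $\lambda \notin \sigma(T)$, in which case $T-\lambda I$ is invertible, hence Weyl, so $\lambda \notin \sigma_W(T)$, or else $\lambda \in E(T)$. Thus every point of $\sigma_W(T)\setminus \sigma_{BW}(T)$ lies in $E(T)$, and it remains to decide, for $\lambda \in E(T)$, when $\lambda \in \sigma_W(T)$. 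Each such $\lambda$ is isolated in $\sigma(T)$ with $T - \lambda I$ B-Weyl; the key structural fact is that a B-Fredholm operator having $0$ as an isolated point of its spectrum is Drazin invertible, so $T-\lambda I$ is a pole of the resolvent, say of order $p$. The spectral projection at $\lambda$ then has range $N((T-\lambda I)^p)$, and the estimate $\dim N((T-\lambda I)^p) \leq p\,\alpha(T-\lambda I)$ shows this range is finite dimensional exactly when $\alpha(T-\lambda I) < \infty$. Since on the complementary spectral subspace $T - \lambda I$ is invertible, it follows that $T-\lambda I$ is Weyl precisely when $\alpha(T-\lambda I)<\infty$, i.e. precisely when $\lambda \in E^0(T)$. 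Hence $E(T)\cap \sigma_W(T) = E(T)\setminus E^0(T)$, and combined with the inclusion above this gives $\sigma_W(T)\setminus \sigma_{BW}(T) = E(T)\setminus E^0(T)$.

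With the order relation and the set-difference identity established, Theorem \ref{thm21} applies verbatim and delivers that $\Phi_W$ is a spectral partitioning function for $T$. The only delicate step is the chain ``B-Weyl at an isolated spectral point $\Rightarrow$ pole $\Rightarrow$ Weyl iff finite multiplicity,'' which rests on the structure theory of B-Fredholm operators rather than on anything internal to the present excerpt; everything else is routine bookkeeping with the definitions and the partitioning hypothesis.
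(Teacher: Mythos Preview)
Your proposal is correct and follows exactly the same route as the paper: verify $\Phi_W \leq \Phi_{BW}$, establish the set-difference identity $\sigma_W(T)\setminus\sigma_{BW}(T)=E(T)\setminus E^0(T)$, and then invoke Theorem~\ref{thm21}. The paper simply asserts that the identity is ``easily seen,'' whereas you supply a full justification via the implication ``B-Weyl at an isolated spectral point $\Rightarrow$ pole $\Rightarrow$ Weyl iff finite multiplicity''; this is precisely the content of \cite[Theorem~2.3]{BW} that the paper relies on elsewhere, so your expansion is in the intended spirit and nothing is missing.
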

\begin{proof} Observe first that $\Phi_{W} \leq \Phi_{BW}.$ Then  if $\Phi_{BW}$ is a spectral partitioning function for $T,$ it's easily seen that  $\sigma_{W}(T) \setminus \sigma_{BW}(T)= E(T) \setminus E^0(T).$ From Theorem \ref{thm21}, it follows that $\Phi_{W}$ is also a spectral  partitioning function for $T.$  \end{proof}
Similarly to Theorem \ref{thm21}, we have the following theorem, which we give without proof.

\begin{thm} \label{thm22} Let $T \in L(X)$ and let $\Psi$ be a spectral partitioning function for $T.$ If $\Phi$ is a spectral valued function such that $\Psi \leq \Phi,$   then $\Phi$ is a spectral partitioning function for  $T$ if and only if $ \Psi_1(T) \setminus \Phi_1(T) = \Phi_2(T) \setminus \Psi_2(T).$
\end{thm}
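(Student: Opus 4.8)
Theorem \ref{thm22} — the plan

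The statement is essentially the mirror image of Theorem \ref{thm21}: the roles of $\Phi$ and $\Psi$ are swapped, so that now $\Psi$ is assumed to be the spectral partitioning function and we ask when $\Phi$ (the larger function in the $\leq$ ordering) is also partitioning. The plan is to follow the proof of Theorem \ref{thm21} almost verbatim, reversing the direction of the inclusions supplied by $\Psi \leq \Phi$.

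\begin{proof}
Suppose first that $\Phi$ is a spectral partitioning function for $T$. Then $\sigma(T)=\Phi_1(T)\bigsqcup\Phi_2(T)=\Psi_1(T)\bigsqcup\Psi_2(T)$, and equating these two disjoint decompositions of the same set immediately yields $\Psi_1(T)\setminus\Phi_1(T)=\Phi_2(T)\setminus\Psi_2(T)$, exactly as in the forward direction of Theorem \ref{thm21}.

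Conversely, assume $\Psi_1(T)\setminus\Phi_1(T)=\Phi_2(T)\setminus\Psi_2(T)$. Since $\Psi$ is a spectral partitioning function for $T$ we have $\sigma(T)=\Psi_1(T)\bigsqcup\Psi_2(T)$. From $\Psi\leq\Phi$ we get $\Phi_1(T)\subset\Psi_1(T)$ and $\Psi_2(T)\subset\Phi_2(T)$. The key step is to verify that $\Phi_1(T)\cup\Phi_2(T)=\sigma(T)$ and that the union is disjoint. For the covering, note $\Psi_2(T)\subset\Phi_2(T)$ gives $\sigma(T)=\Psi_1(T)\cup\Psi_2(T)\subset\Psi_1(T)\cup\Phi_2(T)$, and then the hypothesis lets us absorb the part $\Psi_1(T)\setminus\Phi_1(T)=\Phi_2(T)\setminus\Psi_2(T)\subset\Phi_2(T)$ into $\Phi_2(T)$, so that $\Psi_1(T)\cup\Phi_2(T)=\Phi_1(T)\cup\Phi_2(T)$; combined with $\Phi_1(T)\cup\Phi_2(T)\subset\sigma(T)$ this gives equality with $\sigma(T)$. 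For disjointness one computes $\Phi_1(T)\cap\Phi_2(T)$ by writing $\Phi_2(T)=\Psi_2(T)\cup(\Phi_2(T)\setminus\Psi_2(T))$ and using that $\Phi_1(T)\subset\Psi_1(T)$ is disjoint from $\Psi_2(T)$, while $\Phi_2(T)\setminus\Psi_2(T)=\Psi_1(T)\setminus\Phi_1(T)$ is disjoint from $\Phi_1(T)$ by its very definition. Hence $\sigma(T)=\Phi_1(T)\bigsqcup\Phi_2(T)$ and $\Phi$ is a spectral partitioning function for $T$.
\end{proof}

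The only point demanding genuine care is the disjointness computation in the converse direction: in Theorem \ref{thm21} it was $\Psi_1\cap\Psi_2$ that had to be shown empty, and one has to make sure the symmetric manipulation here — decomposing the \emph{larger} set $\Phi_2(T)$ rather than the larger set $\Psi_1(T)$ — still closes, which it does precisely because the hypothesis $\Psi_1(T)\setminus\Phi_1(T)=\Phi_2(T)\setminus\Psi_2(T)$ is symmetric in the two functions. Beyond that, everything is elementary set algebra driven by the two inclusions in $\Psi\leq\Phi$, so no real obstacle arises; this is why the authors felt justified in stating the result without proof.
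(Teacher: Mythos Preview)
Your proof is correct and is exactly the mirror-image of the argument for Theorem~\ref{thm21} that the paper has in mind; indeed, the paper omits the proof altogether with the remark ``Similarly to Theorem~\ref{thm21}, we have the following theorem, which we give without proof.'' Your covering and disjointness computations are precisely the symmetric versions of those in Theorem~\ref{thm21}, so there is nothing to add.
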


\begin{rema}\label{rema21} \cite[Example 3.12]{BK} There exist operators $T \in L(X)$ such that $\Phi_W$ is a spectral  partitioning function for $T$ but $\Phi_{BW}$ is not  a spectral partitioning function for $T.$
Indeed, let us consider the operator  $Q$ be defined for each $x=(\xi_i)\in l^1$ by
\begin{equation*}
Q(\xi_1,\xi_2,\xi_3,\dots, \xi_k,\dots)
= ( 0, \alpha_1\xi_1, \alpha_2\xi_2,\dots, \alpha_{k-1}\xi_{k-1},\dots),
\end{equation*}
where $(\alpha_i)$ is a sequence of complex numbers such that
$0<|\alpha_i|\le 1$ and $\sum_{i=1}^\infty |\alpha_i|<\infty$.  We
observe that
\begin{equation*}
\overline{R(Q^n)} \ne R(Q^n), \quad n=1,2,\dots
\end{equation*}
Indeed, for a given $n\in\N$ let $x^{(n)}_k = (1,\dots ,
1,0,0,\dots)$ (with $n+k$ times $1$).  Then the limit
$y^{(n)}=\lim_{k\to\infty}Q^n x^{(n)}_k$ exists and lies in
$\overline{R(Q^n)}$.  However, there is no element
$x^{(n)}\in\ell^1$ satisfying the equation $Q^nx^{(n)}=y^{(n)}$ as the
algebraic solution to this equation is  $(1,1,1,\dots
)\notin\ell^1$.

Define $T$ on $X=\ell^1\oplus \ell^1$ by
$T=Q\oplus 0$.  Then $N(T)=\{0\}\oplus \ell^1$,
$\sigma(T)=\{0\}$,
$E(T)=\{0\}$,  $E_0(T)=\nullset$.  Since $R(T^n) = R(Q^n) \oplus
\{0\}$, $R(T^n)$ is not closed for any $n\in\N$; so $T$ is not a
$B$-Weyl operator, and $\sigma_{BW}(T)=\{0\}$.  Further, $T$ is not a Fredholm operator and $\sigma_{W}(T) = \{0\}$.

Hence  $\Phi_W$ is a spectral  partitioning function for $T$ but $\Phi_{BW}$ is not  a spectral partitioning function for $T.$

\end{rema}

\begin{dfn} The Drazin spectral valued function $\Phi_D$  and the Browder spectral valued function $\Phi_B$ are defined respectively by:  $$\Phi_D(T)= ( \sigma_{BW}(T), \Pi(T)), \, \, \Phi_B(T)= ( \sigma_{W}(T), \Pi^0(T)),  \forall T \in L(X).$$

\end{dfn}

\begin{thm}\label{thm23} Let $T \in L(X).$ Then the  Drazin spectral valued function $\Phi_D$ is a spectral partitioning   function for $T$ if and only if the    Browder spectral  valued function $\Phi_B$ is a spectral partitioning function for $T.$

\end{thm}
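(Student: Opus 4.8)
The plan is to realize Theorem \ref{thm23} as a direct application of Theorem \ref{thm21} and Theorem \ref{thm22}, after locating $\Phi_B$ and $\Phi_D$ relative to the order $\leq$. First I would record that $\Phi_B \leq \Phi_D$: a Weyl operator is B-Weyl (a Fredholm operator is B-Fredholm with $n=0$, and the two indices agree), so $\sigma_{BW}(T)\subseteq\sigma_W(T)$; and every pole of finite rank is a pole, so $\Pi^0(T)\subseteq\Pi(T)$. These are exactly the two inclusions required by $\Phi_B\leq\Phi_D$. With this in hand, Theorem \ref{thm21} (with $\Phi=\Phi_D$, $\Psi=\Phi_B$) handles the implication $\Phi_D\Rightarrow\Phi_B$, while Theorem \ref{thm22} (with $\Psi=\Phi_B$, $\Phi=\Phi_D$) handles $\Phi_B\Rightarrow\Phi_D$; in both cases the conclusion is governed by the single set identity
\begin{equation*}
\sigma_W(T)\setminus\sigma_{BW}(T)=\Pi(T)\setminus\Pi^0(T). \tag{$\ast$}
\end{equation*}
Thus it suffices to verify $(\ast)$ under each partitioning hypothesis.

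Next I would dispose of the inclusion $\supseteq$ in $(\ast)$, which holds for every $T$. If $\lambda\in\Pi(T)\setminus\Pi^0(T)$ then $\lambda$ is a pole, so $T-\lambda$ is Drazin invertible, hence B-Fredholm of index $0$, i.e. B-Weyl, giving $\lambda\notin\sigma_{BW}(T)$. Writing $p$ for the order of the pole, $X=N((T-\lambda)^{p})\oplus R((T-\lambda)^{p})$ with $T-\lambda$ nilpotent on the first summand and invertible on the second; since $\lambda$ is not of finite rank, $N((T-\lambda)^{p})$ is infinite dimensional, and a nilpotent operator on an infinite dimensional space is never Fredholm (its $p$-th power is $0$, which is Fredholm only in finite dimension). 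Hence $T-\lambda$ is not Fredholm, so not Weyl, and $\lambda\in\sigma_W(T)$. For the reverse inclusion I would translate the hypotheses: using $\Pi(T)=\sigma(T)\setminus\sigma_D(T)$ and $\Pi^0(T)=\sigma(T)\setminus\sigma_B(T)$ together with $\sigma_{BW}(T)\subseteq\sigma_D(T)$ and $\sigma_W(T)\subseteq\sigma_B(T)$, one checks that $\Phi_D$ is partitioning for $T$ iff $\sigma_{BW}(T)=\sigma_D(T)$, and $\Phi_B$ is partitioning for $T$ iff $\sigma_W(T)=\sigma_B(T)$. Under $\sigma_{BW}(T)=\sigma_D(T)$ the identity $(\ast)$ reduces to $\sigma_W(T)\setminus\sigma_D(T)=\sigma_B(T)\setminus\sigma_D(T)$, and its only nontrivial inclusion $\sigma_B(T)\setminus\sigma_D(T)\subseteq\sigma_W(T)$ is precisely the statement just proved; so $(\ast)$ holds and Theorem \ref{thm21} gives $\Phi_B$. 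This settles $\Phi_D\Rightarrow\Phi_B$.

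The genuinely hard half is the converse $\Phi_B\Rightarrow\Phi_D$, i.e. proving $(\ast)$ under $\sigma_W(T)=\sigma_B(T)$; here the inclusion $\subseteq$ must be established, and it fails in general (one can exhibit $T$ possessing a point that is B-Weyl but not Weyl and not a pole, so $(\ast)$ is \emph{not} a universal identity). The plan is to take $\lambda\in\sigma_W(T)\setminus\sigma_{BW}(T)$, so $T-\lambda$ is B-Fredholm of index $0$ but not Fredholm, and to invoke the punctured-neighbourhood theorem for semi-B-Fredholm operators \cite{BS}: on a small punctured disc about $\lambda$ the operator $T-\mu$ is Fredholm with constant index $0$ and constant nullity. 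If that nullity were positive, the whole punctured disc would lie in $\sigma(T)$ while consisting of Weyl points, forcing $\mu\notin\sigma_W(T)=\sigma_B(T)$ yet $\mu$ non-isolated in $\sigma(T)$ and hence not Browder --- a contradiction; so the nullity is $0$, $T-\mu$ is invertible on the punctured disc, and $\lambda$ is isolated in $\sigma(T)$. An isolated point at which $T-\lambda$ is B-Fredholm is a pole (the quasinilpotent part of $T-\lambda$ on the spectral subspace is B-Fredholm, hence nilpotent), so $\lambda\in\Pi(T)$; and since $\lambda\in\sigma_W(T)=\sigma_B(T)$ the operator $T-\lambda$ is not Browder, so the pole is not of finite rank and $\lambda\in\Pi(T)\setminus\Pi^0(T)$. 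This yields $(\ast)$, whence Theorem \ref{thm22} delivers $\Phi_D$.

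The main obstacle is exactly this last step: the passage from ``B-Weyl but not Weyl'' to ``pole'' is where Browder's theorem $\sigma_W(T)=\sigma_B(T)$ is actually used, and it genuinely requires the B-Fredholm perturbation/punctured-neighbourhood machinery together with the fact that a quasinilpotent B-Fredholm operator is nilpotent. Everything else --- the order comparison $\Phi_B\leq\Phi_D$, the reduction to $(\ast)$ via Theorems \ref{thm21} and \ref{thm22}, the universal inclusion $\supseteq$, and the easy direction under $\sigma_{BW}(T)=\sigma_D(T)$ --- is routine once the dictionary between the partitioning conditions and the spectral equalities $\sigma_{BW}=\sigma_D$, $\sigma_W=\sigma_B$ is in place.
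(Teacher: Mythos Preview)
Your proof is correct and follows essentially the same route as the paper: observe $\Phi_B\leq\Phi_D$, reduce via Theorems~\ref{thm21}--\ref{thm22} to the identity $(\ast)$, and for the hard inclusion use the punctured-neighbourhood result of \cite{BS} together with ``isolated $+$ B-Weyl $\Rightarrow$ pole'' from \cite{BW}. The paper reaches ``$\lambda$ isolated in $\sigma(T)$'' a bit more directly---\cite[Corollary~3.2]{BS} gives $\lambda$ isolated in $\sigma_W(T)$, and since $\sigma(T)=\sigma_W(T)\sqcup\Pi^0(T)$ with $\Pi^0(T)$ consisting of isolated points of $\sigma(T)$, isolation in $\sigma(T)$ follows immediately---so your nullity case-split is unnecessary, but the ingredients and overall strategy are identical.
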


\begin{proof} Observe first that $\Phi_B \leq \Phi_D.$ If $\Phi_D$ is a spectral partitioning   function for $T,$ then
$ \sigma_W(T) \setminus \sigma_{BW}(T)= \Pi(T) \setminus \Pi^0(T).$  From Theorem \ref{thm21}, we conclude that $\Phi_B$ is a spectral partitioning function for $T.$
Conversely assume that  $\Phi_B$ is a spectral  partitioning function for $T.$ Let us show that $ \sigma_W(T) \setminus \sigma_{BW}(T)= \Pi(T) \setminus \Pi^0(T).$ The inclusion $ \Pi(T) \setminus \Pi^0(T) \subset \sigma_W(T) \setminus \sigma_{BW}(T)$ is obvious. For the reverse inclusion, let $ \lambda \in \sigma_W(T) \setminus \sigma_{BW}(T).$ Then from \cite[Corollary 3.2]{BS}, $\lambda $ is isolated in $\sigma_W(T).$  As $\Phi_B$ is a spectral partitioning   function for $T,$ then $\sigma(T)= \sigma_W(T) \bigsqcup \Pi^0(T).$ Hence $\lambda$ is isolated in $\sigma(T).$ As $ \lambda \notin \sigma_{BW}(T),$  then from \cite [Theorem 2.3]{BW},  $\lambda \in \Pi(T) \setminus \Pi_0(T).$ From Theorem \ref{thm22}, it follows that $\Phi_D$ is a spectral partitioning function for $T.$
\end{proof}

The direct implication of Theorem \ref{thm23} had been proved in \cite[Theorem 3.15]{BK},  while the reverse implication was posed as a question in \cite[p. 374]{BK},   and answered in \cite[Theorem 3.1]{CM} and \cite[Theorem 2.1]{AZ}.

\begin{rema} If  the  Drazin spectral valued function $\Phi_D$ is a spectral partitioning   function for $T \in L(X),$ then  $\sigma_{BW}(T)= \sigma_D(T)$ and $\sigma_{W}(T)= \sigma_B(T).$
\end{rema}

\begin{exs}
The following table summarize some of  spectral valued functions considered recently as partitioning functions.
\begin{center}
\vbox{
\[
\begin{tabular}{|l|l|}
\hline
\multicolumn{2}{|c|} {\textbf Spectral valued functions-1} \\[5pt] \hline

$\Phi_W(T)= (\sigma_{W}(T),E^0(T))$ & $\Phi_B(T)=(\sigma_{W}(T),\Pi^0(T))$\\[5pt]
$\Phi_{gW}(T)=(\sigma_{BW}(T),E(T))$& $\Phi_{gB}(T)= (\sigma_{BW}(T),\Pi(T))$ \\[5pt]
$\Phi_{Bw}=(\sigma_{BW}(T), E^0(T))$ & $\Phi_{Bb}(T)=(\sigma_{BW}(T),\Pi^0(T))$ \\[5pt]
$\Phi_{aw}(T)=(\sigma_{W}(T),E_a^0(T))$& $\Phi_{ab}(T)=(\sigma_{W}(T),\Pi_a^0(T))$ \\[5pt]
$\Phi_{gaw}(T)=(\sigma_{BW}(T),E_a(T))$& $\Phi_{gab}(T)=(\sigma_{BW}(T),\Pi_a(T))$ \\[5pt]
$ \Phi_{Baw}(T)=(\sigma_{BW}(T),E_a^0(T))$ & $\Phi_{Bab}(T)=(\sigma_{BW}(T),\Pi_a^0(T))$\\[5pt]

\hline

\end{tabular}
\]

\begin{center}
\underline{Table~1}
\end{center}}

\end{center}
Among the spectral valued functions listed in Table~1, we consider the following cases to illustrate  the use of Theorem \ref{thm21} and Theorem \ref{thm22}

\begin{itemize}

\item It is shown in \cite[Theorem 3.5 ]{BZ3} that if $\Phi_{gaw}$ is  a spectral partitioning function for $ T \in L(X)$, then  $\Phi_{gab}$  is also a partitioning function for $T.$ As  $\Phi_{gab} \leq \Phi_{gaw}$,\, to prove this result  using  Theorem \ref{thm21},   it is enough to prove that $\emptyset= \sigma_{BW}(T) \setminus \sigma_{BW}(T)=  E_a(T) \setminus  \Pi_a(T),$ which is  the case from \cite[Theorem 2.8]{BK}.

\item It is shown in \cite[Theorem 2.9 ]{B1} that if $\Phi_{W}$ is a spectral partitioning function for $ T \in L(X)$, then  $\Phi_{gW}$  is  a partitioning function for $T$  if and only if $E(T)= \Pi(T).$  As  $\Phi_{W} \leq \Phi_{gW}$,\,
    to prove this result using  Theorem \ref{thm22},  it is enough to prove that $\emptyset= \sigma_{W}(T) \setminus \sigma_{W}(T)=  E(T) \setminus  \Pi(T),$ which is  the case from \cite[Corollary2.6]{BW}.

\item It is shown in \cite[Corollary 5 ]{BAR} that if $\Phi_{W}$ is a spectral partitioning function for $ T \in L(X)$, then  $\Phi_{B}$  is also a spectral partitioning function for $T.$ To see this using  Theorem \ref{thm21},  as  $\Phi_{B} \leq \Phi_{W}$,\, it is enough to prove that $\emptyset= \sigma_{W}(T) \setminus \sigma_{W}(T)=  E^0(T) \setminus  \Pi^0(T),$ which is  the case from \cite[Theorem 4.2]{BW}.

\end{itemize}

\end{exs}

\section{ Partitioning  functions for the approximate spectrum}

In this section we study  the relationship between  two  comparable  spectral valued  functions, when one of them is spectral a-partitioning and the other one would be also   spectral a-partitioning.

\begin{dfn} Let  $\Phi$ be a spectral valued  function and let $T \in L(X).$  We will  say that $\Phi$ is a spectral  a-partitioning  function for $T$  if    $\sigma_a(T)= \Phi_1(T) \bigsqcup \Phi_2(T).$

\end{dfn}

\begin{ex}

\begin{itemize}
\item Let $\Phi_{aW}(T)= (\sigma_{SF_+^-}(T), E_a^0(T)), \,\, \forall \,\,T \in L(X).$ From \cite{RA}, it follows that $\Phi_{aW}$ is a spectral a-partitioning function for each normal operator acting on a Hilbert space.
\item Let $\Phi_{gaW}(T)= (\sigma_{SBF_+^-}(T), E_a(T)),\,\,  \forall \,\,T \in L(X).$  In the case of a normal operator $T$ acting on a Hilbert space, we have  $ \sigma(T)= \sigma_a(T)$ and $\Phi_{gaW}(T)=\Phi_{gW}(T).$ From \cite[Theorem 4.5]{BI}, it follows that $\Phi_{gaW}$ is a spectral  a-partitioning for $T.$

\end{itemize}

\end{ex}

\begin{thm} \label{thm31} Let $T \in L(X)$ and let $\Phi$ be  a spectral a-partitioning function for $T.$ If $\Psi$ is a spectral valued function such that $\Psi \leq \Phi,$   then $\Psi$ is   a spectral a-partitioning function for  $T$ if and only if $ \Psi_1(T) \setminus \Phi_1(T) = \Phi_2(T) \setminus \Psi_2(T).$
\end{thm}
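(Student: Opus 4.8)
The plan is to mirror exactly the argument of Theorem~\ref{thm21}, replacing every occurrence of the full spectrum $\sigma(T)$ by the approximate point spectrum $\sigma_a(T)$. The statement is the literal analogue: $\Phi$ is now assumed to be a spectral a-partitioning function for $T$, meaning $\sigma_a(T)=\Phi_1(T)\bigsqcup\Phi_2(T)$, and we ask when a smaller (in the $\leq$ order) spectral valued function $\Psi$ also a-partitions $\sigma_a(T)$. Since the proof of Theorem~\ref{thm21} never used any property of $\sigma(T)$ beyond its being the ambient set that both functions partition, the same formal manipulations carry over verbatim.

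First I would prove the forward implication. Assuming $\Psi$ is a spectral a-partitioning function for $T$, we have $\sigma_a(T)=\Psi_1(T)\bigsqcup\Psi_2(T)=\Phi_1(T)\bigsqcup\Phi_2(T)$. From these two disjoint decompositions of the same set, the set-theoretic identity $\Psi_1(T)\setminus\Phi_1(T)=\Phi_2(T)\setminus\Psi_2(T)$ follows immediately, just as in Theorem~\ref{thm21}.

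For the converse, assume $\Psi_1(T)\setminus\Phi_1(T)=\Phi_2(T)\setminus\Psi_2(T)$. Using that $\Phi$ a-partitions, so $\sigma_a(T)=\Phi_1(T)\bigsqcup\Phi_2(T)$, and that $\Psi\leq\Phi$ gives $\Phi_1(T)\subset\Psi_1(T)$, I would write $\sigma_a(T)=\Psi_1(T)\cup\Phi_2(T)=\Psi_1(T)\cup(\Phi_2(T)\setminus\Psi_2(T))\cup\Psi_2(T)$. The hypothesis shows $\Phi_2(T)\setminus\Psi_2(T)=\Psi_1(T)\setminus\Phi_1(T)\subset\Psi_1(T)$, whence $\sigma_a(T)\subset\Psi_1(T)\cup\Psi_2(T)$; since $\Psi(T)\subset\sigma_a(T)\times\sigma_a(T)$ in the a-partitioning context the reverse inclusion is automatic, giving $\sigma_a(T)=\Psi_1(T)\cup\Psi_2(T)$. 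Then I would verify disjointness by the same computation as before, expanding $\Psi_1(T)=\Phi_1(T)\cup(\Psi_1(T)\setminus\Phi_1(T))$ and intersecting with $\Psi_2(T)$: the first term meets $\Psi_2(T)$ trivially because $\Phi_1(T)\cap\Phi_2(T)=\emptyset$ and $\Psi_2(T)\subset\Phi_2(T)$, while the second term equals $(\Phi_2(T)\setminus\Psi_2(T))\cap\Psi_2(T)=\emptyset$, so $\Psi_1(T)\cap\Psi_2(T)=\emptyset$. Hence $\sigma_a(T)=\Psi_1(T)\bigsqcup\Psi_2(T)$ and $\Psi$ is a spectral a-partitioning function for $T$.

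I do not expect any genuine obstacle: the proof is purely set-theoretic and does not touch the Fredholm-theoretic content distinguishing $\sigma_a(T)$ from $\sigma(T)$. The only point requiring mild care is the inclusion $\Psi_1(T)\cup\Psi_2(T)\subset\sigma_a(T)$ used to upgrade the containment $\sigma_a(T)\subset\Psi_1(T)\cup\Psi_2(T)$ to equality. In the $\sigma$-setting this held because $\Psi(T)\subset\sigma(T)\times\sigma(T)$ by the definition of a spectral valued function; in the a-partitioning setting one should confirm that the relevant spectral valued functions (such as those with components $\sigma_{SF_+^-}(T)$ and $E_a^0(T)$) indeed take values inside $\sigma_a(T)$, which is standard. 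Given that, the verbatim translation of Theorem~\ref{thm21} completes the argument.
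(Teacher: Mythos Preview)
Your proof is correct and follows exactly the paper's own argument, which is indeed the verbatim translation of Theorem~\ref{thm21} with $\sigma(T)$ replaced by $\sigma_a(T)$. Regarding your caveat about $\Psi_1(T)\cup\Psi_2(T)\subset\sigma_a(T)$: this holds in general from the hypotheses (not merely for specific examples), since $\Psi_2(T)\subset\Phi_2(T)\subset\sigma_a(T)$ and $\Psi_1(T)=\Phi_1(T)\cup(\Psi_1(T)\setminus\Phi_1(T))=\Phi_1(T)\cup(\Phi_2(T)\setminus\Psi_2(T))\subset\sigma_a(T)$, which is precisely how the paper secures the reverse inclusion.
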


\begin{proof}
Assume that $\Psi$ is a spectral a-partitioning function for  $T,$ then $\sigma_a(T)= \Psi_1(T) \bigsqcup \Psi_2(T)= \Phi_1(T) \bigsqcup \Phi_2(T).$
Hence $ \Psi_1(T) \setminus \Phi_1(T) = \Phi_2(T) \setminus \Psi_2(T).$

  Conversely if $\Phi$ is  a spectral a-partitioning function for $T$ and  $ \Psi_1(T) \setminus \Phi_1(T) = \Phi_2(T) \setminus \Psi_2(T).$  Then $\Psi_1(T) \subset \sigma_a(T)$  and $\Psi_2(T) \subset \sigma_a(T).$ As  $\sigma_a(T)= \Phi_1(T) \bigsqcup \Phi_2(T)$  and $\Psi \leq \Phi,$ then
$\Phi_1(T) \subset \Psi_1(T)$ and so
$\sigma_a(T)= \Psi_1(T) \cup \Phi_2(T)= \Psi_1(T) \cup (\Phi_2(T) \setminus \Psi_2(T)) \cup \Psi_2(T) =\Psi_1(T) \cup (\Psi_1(T) \setminus \Phi_1(T)) \cup \Psi_2(T)= \Psi_1(T) \cup  \Psi_2(T).$ Moreover, we have   $ \Psi_1(T) \cap \Psi_2(T) = [\Phi_1(T)\cup (\Psi_1(T)\setminus \Phi_1(T))]  \cap \Psi_2(T)= [\Phi_1(T) \cap \Psi_2(T)] \cup [(\Psi_1(T)\setminus \Phi_1(T)) \cap \Psi_2(T)]= [\Phi_1(T) \cap \Psi_2(T)] \cup [(\Phi_2(T)\setminus \Psi_2(T)) \cap \Psi_2(T)]=\emptyset.  $ Hence
$\sigma_a(T)= \Psi_1(T) \bigsqcup \Psi_2(T) $ and $\Psi$ is  a spectral a-partitioning function for $T.$
\end{proof}

In the following corollary, as an application of Theorem \ref{thm31},  we give a direct proof of \cite [Theorem 3.11]{BK}

\begin{cor} If $\Phi_{gaW}$ is  a spectral a-partitioning function for $T,$ then $\Phi_{aW}$ is also a spectral  a-partitioning function for $T.$ \end{cor}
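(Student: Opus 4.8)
The plan is to read this off directly from Theorem \ref{thm31}, with the partitioning function taken to be $\Phi = \Phi_{gaW}$ and the dominated one $\Psi = \Phi_{aW}$. The first thing I would check is that $\Phi_{aW} \leq \Phi_{gaW}$, i.e. the two inclusions demanded by the relation $\leq$: on the first coordinate, $\sigma_{SBF_+^-}(T) \subset \sigma_{SF_+^-}(T)$, since taking $n=0$ in the definition shows that every upper semi-Weyl operator is upper semi-B-Weyl; on the second coordinate, $E_a^0(T) \subset E_a(T)$ trivially, as a finite-multiplicity isolated eigenvalue of $\sigma_a(T)$ is in particular an isolated eigenvalue of $\sigma_a(T)$. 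With these inclusions and the standing hypothesis that $\Phi_{gaW}$ is a-partitioning, Theorem \ref{thm31} reduces the whole statement to the single set identity
\[
\sigma_{SF_+^-}(T) \setminus \sigma_{SBF_+^-}(T) = E_a(T) \setminus E_a^0(T).
\]

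I would prove this identity by double inclusion, keeping the partition $\sigma_a(T) = \sigma_{SBF_+^-}(T) \bigsqcup E_a(T)$ in play throughout. The inclusion $\supseteq$ is the routine one: if $\lambda \in E_a(T) \setminus E_a^0(T)$ then $\lambda$ is an isolated eigenvalue of $\sigma_a(T)$ with $\alpha(T-\lambda I) = \infty$, so $T - \lambda I$ is not even upper semi-Fredholm and $\lambda \in \sigma_{SF_+^-}(T)$, while the disjointness of the partition forces $\lambda \notin \sigma_{SBF_+^-}(T)$; hence $\lambda$ lies in the left-hand set.

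For the reverse inclusion $\subseteq$, take $\lambda \in \sigma_{SF_+^-}(T) \setminus \sigma_{SBF_+^-}(T)$. Since any operator bounded below belongs to $SF_+^-(X)$, one has $\sigma_{SF_+^-}(T) \subset \sigma_a(T)$, so $\lambda \in \sigma_a(T)$, and the partition together with $\lambda \notin \sigma_{SBF_+^-}(T)$ places $\lambda \in E_a(T)$. It remains to show $\lambda \notin E_a^0(T)$, i.e. $\alpha(T - \lambda I) = \infty$. Here I would argue by contradiction: $T - \lambda I$ is upper semi-B-Weyl, and if its nullity were finite then, by the direct-sum (Kato-type) decomposition of semi-B-Fredholm operators in \cite{BS}, the nilpotent part would act on a finite-dimensional space, forcing $T - \lambda I$ to be upper semi-Fredholm with unchanged index, hence upper semi-Weyl --- contradicting $\lambda \in \sigma_{SF_+^-}(T)$. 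I expect this implication, ``upper semi-B-Weyl together with finite nullity yields upper semi-Weyl,'' to be the only genuine obstacle; it is the exact analogue of the B-Weyl fact underlying Corollary \ref{cor21}, and everything surrounding it is pure set bookkeeping. Once the displayed identity is in hand, Theorem \ref{thm31} delivers at once that $\Phi_{aW}$ is a spectral a-partitioning function for $T$.
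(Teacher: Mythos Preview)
Your proposal is correct and follows exactly the paper's approach: verify $\Phi_{aW}\leq\Phi_{gaW}$, reduce via Theorem~\ref{thm31} to the set identity $\sigma_{SF_+^-}(T)\setminus\sigma_{SBF_+^-}(T)=E_a(T)\setminus E_a^0(T)$, and check the latter. The paper merely asserts the identity as ``easily seen'' while you supply a double-inclusion argument; your only slightly loose step is invoking a direct-sum decomposition from \cite{BS} for \emph{semi}-B-Fredholm operators---the fact ``upper semi-B-Weyl with finite nullity implies upper semi-Weyl'' is true, but is more cleanly obtained by noting that $\alpha(T-\lambda I)<\infty$ forces $\alpha((T-\lambda I)^n)<\infty$, so $(T-\lambda I)^n$ and hence $T-\lambda I$ is upper semi-Fredholm.
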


\begin{proof} If $\Phi_{gaW}$ is a spectral  a-partitioning function for $T,$ then it's easily seen that  $\sigma_{SF_+^-}(T) \setminus \sigma_{SBF_+^-}(T)= E_a(T) \setminus E_a^0(T).$ From Theorem \ref{thm31}, it follows that $\Phi_{aW}$ is also a spectral  a-partitioning function for $T.$ \end{proof}

Similarly to Theorem \ref{thm31}, we have the following theorem, which we give without proof.

\begin{thm} \label{thm32}  Let $T \in L(X)$ and let $\Psi$ be a spectral a-partitioning function for $T.$ If $\Phi$ is a spectral valued function such that $\Psi \leq \Phi,$   then $\Phi$ is  a spectral a-partitioning function for  $T$ if and only if $ \Psi_1(T) \setminus \Phi_1(T) = \Phi_2(T) \setminus \Psi_2(T).$
\end{thm}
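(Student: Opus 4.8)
The plan is to mirror the proof of Theorem~\ref{thm31} almost verbatim, since Theorem~\ref{thm32} is the companion statement obtained by swapping the roles of $\Psi$ and $\Phi$: here it is $\Psi$ that is assumed a-partitioning and $\Phi$ whose a-partitioning status we wish to characterize. The equivalence to be proved is again symmetric in form, namely that $\Phi$ a-partitions $\sigma_a(T)$ if and only if $\Psi_1(T)\setminus\Phi_1(T)=\Phi_2(T)\setminus\Psi_2(T)$.

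For the forward direction I would assume $\Phi$ is a spectral a-partitioning function for $T$, so that $\sigma_a(T)=\Phi_1(T)\sqcup\Phi_2(T)$. Combining this with the hypothesis that $\Psi$ is also a-partitioning, we get $\Psi_1(T)\sqcup\Psi_2(T)=\Phi_1(T)\sqcup\Phi_2(T)=\sigma_a(T)$. From the order relation $\Psi\leq\Phi$ we have $\Phi_1(T)\subset\Psi_1(T)$ and $\Psi_2(T)\subset\Phi_2(T)$, so the two disjoint decompositions of the same set force the equality of the corresponding set differences; concretely, a point in $\Psi_1(T)\setminus\Phi_1(T)$ must lie in $\Phi_2(T)$ (since it is in $\sigma_a(T)$ but not in $\Phi_1(T)$) yet not in $\Psi_2(T)$ (since it already lies in $\Psi_1(T)$, which is disjoint from $\Psi_2(T)$), giving one inclusion, and the reverse inclusion is symmetric.

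For the converse I would assume $\Psi_1(T)\setminus\Phi_1(T)=\Phi_2(T)\setminus\Psi_2(T)$ and use the assumed a-partitioning of $\Psi$, that is $\sigma_a(T)=\Psi_1(T)\sqcup\Psi_2(T)$. Since $\Psi\leq\Phi$ gives $\Phi_1(T)\subset\Psi_1(T)$ and $\Psi_2(T)\subset\Phi_2(T)$, together with $\Phi(T)\subset\sigma_a(T)\times\sigma_a(T)$, I would first show $\sigma_a(T)=\Phi_1(T)\cup\Phi_2(T)$ by rewriting $\Psi_1(T)=\Phi_1(T)\cup(\Psi_1(T)\setminus\Phi_1(T))=\Phi_1(T)\cup(\Phi_2(T)\setminus\Psi_2(T))\subset\Phi_1(T)\cup\Phi_2(T)$, so that $\sigma_a(T)=\Psi_1(T)\cup\Psi_2(T)\subset\Phi_1(T)\cup\Phi_2(T)$, while the reverse inclusion is automatic. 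I would then verify disjointness by the same bracketed computation used in Theorem~\ref{thm31}: expand $\Phi_1(T)\cap\Phi_2(T)$, substitute $\Phi_2(T)=\Psi_2(T)\cup(\Phi_2(T)\setminus\Psi_2(T))=\Psi_2(T)\cup(\Psi_1(T)\setminus\Phi_1(T))$, and check each resulting intersection is empty using $\Phi_1(T)\cap\Psi_2(T)=\emptyset$ (from $\Psi_2\subset\Phi_2$ disjoint from $\Phi_1$ after the decomposition) and $\Phi_1(T)\cap(\Psi_1(T)\setminus\Phi_1(T))=\emptyset$.

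I expect no genuine obstacle here, as the argument is purely set-theoretic bookkeeping identical in structure to the already-proved Theorem~\ref{thm31}; the only point requiring mild care is the direction of the inclusions dictated by $\Psi\leq\Phi$, since the asymmetry of the order relation ($\Phi_1\subset\Psi_1$ but $\Psi_2\subset\Phi_2$) must be tracked correctly throughout. This is exactly why the paper states the result without proof.
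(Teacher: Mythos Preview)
Your approach is exactly what the paper intends: it states Theorem~\ref{thm32} without proof, deferring to the argument of Theorem~\ref{thm31} with the roles of $\Psi$ and $\Phi$ swapped, and your mirroring of that computation is the right idea.

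Two small slips to clean up. First, you write ``together with $\Phi(T)\subset\sigma_a(T)\times\sigma_a(T)$'' and then call the reverse inclusion $\Phi_1(T)\cup\Phi_2(T)\subset\sigma_a(T)$ ``automatic''. It is not: the definition of a spectral valued function only gives $\Phi_i(T)\subset\sigma(T)$, not $\sigma_a(T)$. You must instead argue $\Phi_1(T)\subset\Psi_1(T)\subset\sigma_a(T)$ and $\Phi_2(T)=\Psi_2(T)\cup(\Phi_2(T)\setminus\Psi_2(T))=\Psi_2(T)\cup(\Psi_1(T)\setminus\Phi_1(T))\subset\sigma_a(T)$, using that $\Psi$ is a-partitioning. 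Second, your justification for $\Phi_1(T)\cap\Psi_2(T)=\emptyset$ (``from $\Psi_2\subset\Phi_2$ disjoint from $\Phi_1$ after the decomposition'') is circular, since disjointness of $\Phi_1$ and $\Phi_2$ is precisely what you are proving; the correct reason is $\Phi_1(T)\subset\Psi_1(T)$ and $\Psi_1(T)\cap\Psi_2(T)=\emptyset$ because $\Psi$ is a-partitioning. With these fixes your argument goes through verbatim.
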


\begin{rema} The spectral valued function  $\Phi_{aW}$  is a spectral a-partitioning function for the operator $T$ considered in Remark \ref{rema21},  but $\Phi_{gaW}$ is not  a spectral a-partitioning  function for $T.$
\end{rema}

\begin{dfn} The Left-Drazin spectral valued function $\Psi_{gaB}$ is defined by:
$$\Psi_{gaB}(T)= ( \sigma_{SBF_+^-}(T), \Pi_a(T)), \, \forall T \in L(X),$$
while the Upper-Browder spectral valued
 function $\Psi_{aB}$ is defined on $L(X)$ by:
 $$\Psi_{aB}(T)= ( \sigma_{SF_+^-}(T), \Pi_a^0(T)),  \forall T \in L(X).$$

\end{dfn}

\begin{thm} \label{thm33} Let $T \in L(X).$ Then the Left-Drazin spectral valued function $\Psi_{gaB}$ is a spectral a-partitioning function for $T$ if and only if the upper-Browder spectral valued function $\Psi_{aB}$ is a spectral a-partitioning function for $T.$

\end{thm}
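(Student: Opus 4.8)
The plan is to transport the proof of Theorem \ref{thm23} to the approximate-point-spectrum setting, using Theorem \ref{thm31} and Theorem \ref{thm32} in place of Theorem \ref{thm21} and Theorem \ref{thm22}. First I would record that $\Psi_{aB} \leq \Psi_{gaB}$: every upper semi-Weyl operator is upper semi-B-Weyl, so $\sigma_{SBF_+^-}(T) \subset \sigma_{SF_+^-}(T)$, and every left pole of finite rank is a left pole, so $\Pi_a^0(T) \subset \Pi_a(T)$. With this comparison in hand, Theorem \ref{thm31} (applied with $\Phi = \Psi_{gaB}$ and $\Psi = \Psi_{aB}$) and Theorem \ref{thm32} (applied with $\Psi = \Psi_{aB}$ and $\Phi = \Psi_{gaB}$) reduce both implications of the stated equivalence to the single set identity
\[
\sigma_{SF_+^-}(T) \setminus \sigma_{SBF_+^-}(T) = \Pi_a(T) \setminus \Pi_a^0(T),
\]
which must be verified once under the hypothesis that $\Psi_{gaB}$ is a-partitioning and once under the hypothesis that $\Psi_{aB}$ is a-partitioning.

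The inclusion $\Pi_a(T) \setminus \Pi_a^0(T) \subset \sigma_{SF_+^-}(T) \setminus \sigma_{SBF_+^-}(T)$ holds with no extra hypothesis: if $\lambda$ is a left pole then $T - \lambda I \in LD(X)$ is upper semi-B-Weyl, so $\lambda \notin \sigma_{SBF_+^-}(T)$, while if in addition $\alpha(T - \lambda I) = \infty$ then $T - \lambda I$ is not upper semi-Fredholm and $\lambda \in \sigma_{SF_+^-}(T)$. For the forward implication I would assume $\sigma_a(T) = \sigma_{SBF_+^-}(T) \bigsqcup \Pi_a(T)$; then any $\lambda \in \sigma_{SF_+^-}(T) \setminus \sigma_{SBF_+^-}(T) \subset \sigma_a(T)$ must lie in $\Pi_a(T)$ by disjointness, and it cannot be of finite rank, since otherwise $T - \lambda I$ would be upper semi-Browder, hence upper semi-Weyl, contradicting $\lambda \in \sigma_{SF_+^-}(T)$. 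This gives the missing inclusion, so the identity holds and Theorem \ref{thm31} yields that $\Psi_{aB}$ is a-partitioning.

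The converse is where I expect the real difficulty. Assuming $\sigma_a(T) = \sigma_{SF_+^-}(T) \bigsqcup \Pi_a^0(T)$, I must still prove $\sigma_{SF_+^-}(T) \setminus \sigma_{SBF_+^-}(T) \subset \Pi_a(T) \setminus \Pi_a^0(T)$. For $\lambda$ in the left-hand set, $T - \lambda I$ is upper semi-B-Weyl but not upper semi-Weyl, and the upper semi-B-Fredholm analogue of \cite[Corollary 3.2]{BS} shows $\lambda$ is isolated in $\sigma_{SF_+^-}(T)$. To upgrade this to isolation in $\sigma_a(T)$ I would invoke the punctured-neighbourhood theorem for upper semi-B-Fredholm operators: on a punctured disc around $\lambda$ the operator $T - \mu I$ is upper semi-Fredholm with $\ind{T-\mu I} = \ind{T - \lambda I} \leq 0$ and with $\alpha(T - \mu I)$ constant, so each such $\mu$ lies outside $\sigma_{SF_+^-}(T)$; if this constant nullity were positive, the whole punctured disc would consist of points that are not bounded below, hence sit inside $\sigma_a(T) = \sigma_{SF_+^-}(T) \bigsqcup \Pi_a^0(T)$ and thus inside $\Pi_a^0(T)$, which is impossible since left poles are isolated in $\sigma_a(T)$. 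Therefore the nullity vanishes, $\lambda$ is isolated in $\sigma_a(T)$, and the left-pole analogue of \cite[Theorem 2.3]{BW} gives $\lambda \in \Pi_a(T)$; as before $\lambda \notin \Pi_a^0(T)$, which completes the identity, after which Theorem \ref{thm32} concludes. The main obstacle is precisely this reverse inclusion: it rests on the two approximate-spectrum counterparts — the isolation statement and the characterisation of left poles via isolation in $\sigma_a(T)$ together with the upper semi-B-Weyl property — which play the role of the B-Fredholm facts invoked for Theorem \ref{thm23}.
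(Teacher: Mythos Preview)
Your proposal is correct and follows essentially the same route as the paper: record $\Psi_{aB}\leq\Psi_{gaB}$, reduce both implications via Theorem~\ref{thm31} and Theorem~\ref{thm32} to the identity $\sigma_{SF_+^-}(T)\setminus\sigma_{SBF_+^-}(T)=\Pi_a(T)\setminus\Pi_a^0(T)$, and for the converse obtain isolation of $\lambda$ in $\sigma_a(T)$ from \cite[Corollary~3.2]{BS} together with the decomposition $\sigma_a(T)=\sigma_{SF_+^-}(T)\bigsqcup\Pi_a^0(T)$, then invoke \cite[Theorem~2.8]{BK} (your ``left-pole analogue of \cite[Theorem~2.3]{BW}''). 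The only difference is cosmetic: the paper passes from isolation in $\sigma_{SF_+^-}(T)$ to isolation in $\sigma_a(T)$ in one line, whereas your punctured-neighbourhood and constant-nullity argument spells out precisely why no sequence in $\Pi_a^0(T)$ can accumulate at $\lambda$, which is a welcome clarification rather than a different idea.
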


\begin{proof} Observe first that $\Psi_{aB} \leq \Psi_{gaB}.$ If $\Psi_{gaB}$ is a spectral a-partitioning   function for $T,$ then
$ \sigma_{SF_+^-}(T) \setminus \sigma_{SBF_+^-}(T)= \Pi_a(T) \setminus \Pi_a^0(T).$  From Theorem \ref{thm31}, we conclude that $\Psi_{aB}$ is a spectral  a-partitioning function for $T.$
Conversely assume that  $\Psi_{aB}$ is a spectral a-partitioning function for $T.$ Let us show that $ \sigma_{SF_+^-}(T) \setminus \sigma_{SBF_+^-}(T)= \Pi_a(T) \setminus \Pi_a^0(T).$ The inclusion $ \Pi_a(T) \setminus \Pi_a^0(T) \subset \sigma_{SF_+^-}(T) \setminus \sigma_{SBF_+^-}(T)$ is obvious. For the reverse inclusion, let $ \lambda \in \sigma_{SF_+^-}(T) \setminus \sigma_{SBF_+^-}(T).$ From \cite[Corollary 3.2]{BS},  $\lambda $ is isolated in \, $\sigma_{SF_+^-}(T).$  As $\Psi_{aB}$ is a spectral partitioning   function for $T,$ then $\sigma_a(T)= \sigma_{SF_+^-}(T) \bigsqcup \Pi_a^0(T).$ Hence $\lambda$ is isolated in $\sigma_a(T).$ From \cite[Theorem 2.8]{BK},  it follows that  $\lambda \in \Pi_a(T) \setminus \Pi_a^0(T).$
\end{proof}

The direct implication of Theorem \ref{thm33} had been proved in \cite[Theorem 3.8]{BK},  while the reverse implication was posed as a question in \cite[p. 374]{BK},   and answered in \cite[Theorem 1.3]{XH} and \cite[Theorem 2.2]{AZ}.

\begin{rema} If  the  Left-Drazin spectral valued function $\Psi_{gaB}$ is a spectral a-partitioning   function for $T \in L(X),$ then  $\sigma_{SBF_-^+}(T)= \sigma_{LD}(T)$ and $\sigma_{SF_-^+}(T)= \sigma_{uB}(T).$
\end{rema}

\begin{exs}

The following table summarize some of spectral valued functions considered recently as a-partitioning functions.

\begin{center}

\vbox{
\[
\begin{tabular}{|l|l|}
\hline

\multicolumn{2}{|c|} {\textbf Spectral valued functions-2} \\[5pt] \hline
$\Psi_{aW}(T)=(\sigma_{SF_+^-}(T),E_a^0(T))$ & $\Psi_{aB}(T)=(\sigma_{SF_+^-}(T), \Pi_a^0(T))$\\[5pt]
$\Psi_{gaW}(T)=(\sigma_{SBF_+^-}(T),E_a(T))$& $\Psi_{gaB}(T)= (\sigma_{SBF_+^-}(T),\Pi_a(T))$ \\[5pt]
$\Psi_{w}(T)= (\sigma_{SF_+^-}(T),E^0(T))$ & $\Psi_{b}(T)= (\sigma_{SF_+^-}(T), \Pi^0(T))$ \\[5pt]
 $\Psi_{gw}(T)=(\sigma_{SBF_+^-}(T),E(T))$ & $\Psi_{gb}(T)=(\sigma_{SBF_+^-}(T),\Pi(T))$\\[5pt]
$ \Psi_{SBw}(T)= (\sigma_{SBF_+^-}(T),E^0(T))$&$\Psi_{SBb}(T) =(\sigma_{SBF_+^-}(T),\Pi^0(T))$ \\[5pt]
 $\Psi_{SBaw}(T)=(\sigma_{SBF_+^-}(T),E_a^0(T)) $ & $\Psi_{SBab}(T)=(\sigma_{SBF_+^-}(T), \Pi_a^0(T))$\\[5pt]

\hline
\end{tabular}
\]

\begin{center}
\underline{Table~2}
\end{center}}

\end{center}

Among the spectral valued functions listed in Table~2, we consider the following cases to illustrate  the use of Theorem \ref{thm31} and Theorem \ref{thm32}

\begin{itemize}

\item It is shown in \cite[Theorem 2.15]{BZ1} that if $\Psi_{gw}$ is a spectral a-partitioning function for $ T \in L(X)$, then  $\Psi_{gb}$  is also a spectral a-partitioning function for $T.$ Since $\Psi_{gb}  \leq  \Psi_{gw},$ to prove this result  using  Theorem \ref{thm31},   it is enough to prove that $\emptyset= \sigma_{SBF_+^-}(T) \setminus \sigma_{SBF_+^-}(T)=  E(T) \setminus  \Pi(T),$ which is  the case from
    \cite[Theorem 4.2]{BI}.

\item It is shown in \cite[Corollary 3.3 ]{BK} that if $\Psi_{gaW}$ is a spectral a-partitioning function for $ T \in L(X)$, then  $\Psi_{gaB}$  is   a spectral  a-partitioning function for $T.$ Since $\Psi_{gaB}  \leq  \Psi_{gaW},$  to prove this result  using  Theorem \ref{thm31},   it is enough to prove that $\emptyset= \sigma_{SBF_+^-}(T) \setminus \sigma_{SBF_+^-}(T)=  E_a(T) \setminus  \Pi_a(T),$ which is  the case from
    \cite[Theorem 2.8]{BK}.

\end{itemize}
\end{exs}

\section {Crossed  Results}
In this section we consider the situation of two  comparable spectral valued  functions, one is spectral partitioning, while the other one would be  spectral a-partitioning, and vice-versa.

\begin{thm} \label{thm41} Let $T \in L(X)$ and let $\Phi$ be  a spectral partitioning function for $T.$ If $\Psi$ is a spectral valued function such that $\Psi \leq \Phi,$   then $\Psi$ is a spectral a-partitioning function for  $T$ if and only if $ \Phi_2(T) \setminus \Psi_2(T) = (\Psi_1(T) \setminus \Phi_1(T)) \bigsqcup  (\sigma(T)\setminus \sigma_a(T)).$
\end{thm}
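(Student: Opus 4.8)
The plan is to run the same purely set-theoretic argument as in Theorem \ref{thm21}, while keeping careful track of the gap $\sigma(T)\setminus\sigma_a(T)$, since here $\Phi$ partitions the full spectrum $\sigma(T)$ whereas $\Psi$ must partition the smaller set $\sigma_a(T)$. Throughout I abbreviate $A_i=\Phi_i(T)$ and $B_i=\Psi_i(T)$, and I use the standing facts $\sigma(T)=A_1\bigsqcup A_2$, $A_1\subset B_1$ and $B_2\subset A_2$ (from $\Psi\leq\Phi$), $B_1,B_2\subset\sigma(T)$ (as $\Psi$ is spectral valued), and $\sigma_a(T)\subset\sigma(T)$. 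The single observation driving both implications is that whenever $A_1\subset\sigma_a(T)$, disjointness of $A_1$ and $A_2$ forces the escaping part $\sigma(T)\setminus\sigma_a(T)$ to lie inside $A_2$.

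For the direct implication I would assume $\sigma_a(T)=B_1\bigsqcup B_2$. Then $A_1\subset B_1\subset\sigma_a(T)$, so the observation applies and $\sigma(T)\setminus\sigma_a(T)=A_2\setminus\sigma_a(T)$. I then split $A_2\setminus B_2$ according to membership in $\sigma_a(T)$: using $B_2\subset A_2$ and $B_1\cap B_2=\emptyset$, the part lying in $\sigma_a(T)$ reduces to $A_2\cap B_1$, and since $B_1\setminus A_1\subset B_1\subset\sigma(T)$ is disjoint from $A_1$ it sits in $A_2$, so this part is exactly $B_1\setminus A_1$; the part lying outside $\sigma_a(T)$ is $A_2\setminus\sigma_a(T)=\sigma(T)\setminus\sigma_a(T)$. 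As $B_1\setminus A_1\subset\sigma_a(T)$ while $\sigma(T)\setminus\sigma_a(T)$ avoids $\sigma_a(T)$, the union is disjoint, yielding the stated identity.

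For the converse I would assume $A_2\setminus B_2=(B_1\setminus A_1)\bigsqcup(\sigma(T)\setminus\sigma_a(T))$, and read off the inclusions $\sigma(T)\setminus\sigma_a(T)\subset A_2\setminus B_2$ and $B_1\setminus A_1\subset A_2\setminus B_2$. The first shows $\sigma(T)\setminus\sigma_a(T)$ is disjoint from $A_1$, hence $A_1\subset\sigma_a(T)$, and disjoint from $B_2$, hence $B_2\subset\sigma_a(T)$ (as $B_2\subset\sigma(T)$); likewise $B_1\setminus A_1\subset A_2$ is disjoint from $\sigma(T)\setminus\sigma_a(T)$, so $B_1=A_1\cup(B_1\setminus A_1)\subset\sigma_a(T)$, giving $B_1\cup B_2\subset\sigma_a(T)$. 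For the reverse inclusion I take $\lambda\in\sigma_a(T)\subset\sigma(T)=A_1\bigsqcup A_2$: if $\lambda\in A_1\subset B_1$ we are done; if $\lambda\in A_2$ then either $\lambda\in B_2$, and we are done, or $\lambda\in A_2\setminus B_2$, in which case the hypothesis puts $\lambda$ in $(B_1\setminus A_1)\bigsqcup(\sigma(T)\setminus\sigma_a(T))$, and membership in $\sigma_a(T)$ excludes the second summand, so $\lambda\in B_1$. Disjointness $B_1\cap B_2=\emptyset$ follows since $A_1\cap B_2\subset A_1\cap A_2=\emptyset$ and $(B_1\setminus A_1)\cap B_2=\emptyset$ by the hypothesis $B_1\setminus A_1\subset A_2\setminus B_2$; hence $\sigma_a(T)=B_1\bigsqcup B_2$.

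The only genuine obstacle, and what distinguishes this statement from Theorem \ref{thm21} and Theorem \ref{thm31}, is that a priori $B_1=\Psi_1(T)$ and $B_2=\Psi_2(T)$ are known only to lie in $\sigma(T)$, not in $\sigma_a(T)$. The crux of the converse is therefore to extract the containments $B_1,B_2\subset\sigma_a(T)$ from the single set identity; everything else is the same disjoint-union bookkeeping as before, and I expect the extra term $\sigma(T)\setminus\sigma_a(T)$ to be precisely the device that makes this extraction work.
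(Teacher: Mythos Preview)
Your proof is correct and follows essentially the same set-theoretic route as the paper: both directions amount to comparing the partition $\sigma(T)=\Phi_1(T)\bigsqcup\Phi_2(T)$ with the decomposition $\sigma(T)=\sigma_a(T)\cup(\sigma(T)\setminus\sigma_a(T))$ and tracking where the pieces $\Psi_i(T)$ land. Your write-up is in fact more careful than the paper's at the one genuinely new point, namely extracting $\Psi_1(T),\Psi_2(T)\subset\sigma_a(T)$ from the hypothesis in the converse; the paper compresses this into a single sentence, while you spell out why each inclusion holds.
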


\begin{proof}
If $\Psi$ is a spectral partitioning function for  $T,$ then $\sigma(T)= [\Psi_1(T) \cup \Psi_2(T)] \bigsqcup [(\sigma(T) \setminus \sigma_a(T))]= [\Phi_1(T) \cup (\Psi_1(T) \setminus \Phi_1(T))\cup\Psi_2(T)]\bigsqcup[(\sigma(T) \setminus \sigma_a(T))]. $  Hence  $\Phi_2(T) \setminus \Psi_2(T)=(\Psi_1(T) \setminus \Phi_1(T)) \bigsqcup  (\sigma(T)\setminus \sigma_a(T)).$\\
Conversely assume that $\Phi$ is  a spectral partitioning function for $T$ and $ \Phi_2(T) \setminus \Psi_2(T) = (\Psi_1(T) \setminus \Phi_1(T)) \bigsqcup (\sigma(T)\setminus \sigma_a(T)).$ Then $ \sigma(T)= \Phi_1(T)\bigsqcup \Phi_2(T)= \Phi_1(T) \cup (\Phi_2(T)\setminus \Psi_2(T)) \cup \Psi_2(T)=
 \Phi_1(T) \cup(\Psi_1(T) \setminus \Phi_1(T)) \cup  (\sigma(T)\setminus \sigma_a(T))\cup \Psi_2(T)= \Psi_1(T)  \cup \Psi_2(T)\cup (\sigma(T)\setminus \sigma_a(T)).$  Hence $\sigma_a(T)\subset \Psi_1(T)  \cup \Psi_2(T).$

 Since  $ \Phi_2(T) \setminus \Psi_2(T) = (\Psi_1(T) \setminus \Phi_1(T)) \bigsqcup (\sigma(T)\setminus \sigma_a(T)),$ then $ \Psi_1(T)  \cup \Psi_2(T)\subset \sigma_a(T).$ Moreover as $\Psi_2(T) \subset  \Phi_2(T),$
    we have $\Psi_1(T) \cap \Psi_2(T)= (\Phi_1(T) \cup (\Psi_1(T) \setminus \Phi_1(T)) \cap \Psi_2(T)= \emptyset.$   Then
$\sigma_a(T)= \Psi_1(T)  \bigsqcup \Psi_2(T)$  and $\Psi$ is a spectral a-partitioning function for $T.$
\end{proof}

\begin{cor}\label{cor41} Let $T \in L(X)$ and let $\Psi$ be a spectral a-partitioning function for $T.$ If $\Phi$ is a spectral valued function such that $\Psi \leq \Phi,$   then $\Phi$ is a spectral partitioning function for  $T$ if and only if $ \Phi_2(T) \setminus \Psi_2(T) = (\Psi_1(T) \setminus \Phi_1(T)) \bigsqcup  (\sigma(T)\setminus \sigma_a(T)).$
\end{cor}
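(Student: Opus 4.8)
The plan is to mirror the proof of Theorem~\ref{thm41}, since the stated equivalence is precisely the dual situation in which the roles of the partitioning function and the a-partitioning function are interchanged, while the defining set identity stays the same. Throughout I would exploit the inclusion $\sigma_a(T) \subset \sigma(T)$, which lets me write $\sigma(T) = \sigma_a(T) \bigsqcup (\sigma(T) \setminus \sigma_a(T))$, together with the two consequences of $\Psi \leq \Phi$, namely $\Phi_1(T) \subset \Psi_1(T)$ and $\Psi_2(T) \subset \Phi_2(T)$.

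For the forward implication I would assume that $\Phi$ is a spectral partitioning function (in addition to the standing hypothesis that $\Psi$ is a spectral a-partitioning function) and derive the set identity. Starting from $\sigma(T) = \Psi_1(T) \bigsqcup \Psi_2(T) \bigsqcup (\sigma(T) \setminus \sigma_a(T))$ and substituting $\Psi_1(T) = \Phi_1(T) \bigsqcup (\Psi_1(T) \setminus \Phi_1(T))$, I would compare with $\sigma(T) = \Phi_1(T) \bigsqcup \Phi_2(T)$ and cancel the common disjoint summand $\Phi_1(T)$; since $\Psi_2(T) \subset \Phi_2(T)$, this yields $\Phi_2(T) \setminus \Psi_2(T) = (\Psi_1(T) \setminus \Phi_1(T)) \bigsqcup (\sigma(T) \setminus \sigma_a(T))$. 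The only thing requiring care here is the legitimacy of the cancellation, i.e.\ that $\Phi_1(T)$ is disjoint from each of the other three pieces; the nontrivial case, $\Phi_1(T) \cap (\sigma(T) \setminus \sigma_a(T)) = \emptyset$, follows because $\Phi_1(T) \subset \Psi_1(T) \subset \sigma_a(T)$.

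For the converse I would assume the set identity and show $\sigma(T) = \Phi_1(T) \bigsqcup \Phi_2(T)$. For the covering, from $\sigma(T) = \Psi_1(T) \cup \Psi_2(T) \cup (\sigma(T) \setminus \sigma_a(T))$ and $\Psi_1(T) = \Phi_1(T) \cup (\Psi_1(T) \setminus \Phi_1(T))$, I would substitute the identity to replace $(\Psi_1(T) \setminus \Phi_1(T)) \cup (\sigma(T) \setminus \sigma_a(T))$ by $\Phi_2(T) \setminus \Psi_2(T)$ and then use $\Psi_2(T) \subset \Phi_2(T)$ to conclude $\sigma(T) = \Phi_1(T) \cup \Phi_2(T)$; the reverse inclusion is automatic because $\Phi$ is a spectral valued function. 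For disjointness, writing $\Phi_2(T) = (\Phi_2(T) \setminus \Psi_2(T)) \cup \Psi_2(T)$ and intersecting with $\Phi_1(T)$, the term $\Phi_1(T) \cap \Psi_2(T)$ vanishes because $\Phi_1(T) \subset \Psi_1(T)$ and $\Psi_1(T) \cap \Psi_2(T) = \emptyset$, while $\Phi_1(T) \cap (\Phi_2(T) \setminus \Psi_2(T))$ splits via the identity into a trivially empty part and the part $\Phi_1(T) \cap (\sigma(T) \setminus \sigma_a(T))$.

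The step I expect to be the main obstacle --- really the only subtle point --- is again the verification that $\Phi_1(T) \cap (\sigma(T) \setminus \sigma_a(T)) = \emptyset$ in the disjointness argument of the converse. This is where the hypothesis that $\Psi$ is an a-partitioning function does genuine work: it forces $\Psi_1(T) \subset \sigma_a(T)$, and combined with $\Phi_1(T) \subset \Psi_1(T)$ from $\Psi \leq \Phi$ it guarantees $\Phi_1(T) \subset \sigma_a(T)$, so the first coordinate of $\Phi$ cannot meet the set $\sigma(T) \setminus \sigma_a(T)$. Everything else is routine disjoint-union bookkeeping already exemplified in the proofs of Theorem~\ref{thm41} and Theorem~\ref{thm21}.
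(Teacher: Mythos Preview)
Your proof is correct and follows essentially the same approach as the paper's own argument: the covering and disjointness computations in your converse direction are line-for-line the paper's, including the key observation that $\Phi_1(T)\subset\Psi_1(T)\subset\sigma_a(T)$ kills the intersection with $\sigma(T)\setminus\sigma_a(T)$. The only cosmetic difference is that for the forward implication the paper simply invokes Theorem~\ref{thm41} (since under the combined hypotheses both $\Phi$ is partitioning and $\Psi$ is a-partitioning, the identity is the ``only if'' direction of that theorem), whereas you unwind that invocation explicitly; the content is identical.
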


\begin{proof}
Assume that $\Phi$  is a spectral partitioning  function for $T.$ As  $\Psi \leq \Phi,$ and $\Psi$ is  a spectral a-partitioning function for $T,$ then from  Theorem \ref{thm41}, we have $ \Phi_2(T) \setminus \Psi_2(T) = (\Psi_1(T) \setminus \Phi_1(T)) \bigsqcup (\sigma(T)\setminus \sigma_a(T)).$\\
Conversely assume that $\Psi$ is  a spectral a-partitioning function for $T$ and $ \Phi_2(T) \setminus \Psi_2(T) = (\Psi_1(T) \setminus \Phi_1(T)) \bigsqcup (\sigma(T)\setminus \sigma_a(T)).$ Then $ \sigma(T)= [\Psi_1(T) \cup \Psi_2(T)]\bigsqcup [(\sigma(T)\setminus \sigma_a(T))]=   [ \Phi_1(T) \cup (\Psi_1(T) \setminus \Phi_1(T)) \cup \Psi_2(T)] \bigsqcup [\sigma(T)\setminus \sigma_a(T)]=
\Phi_1(T) \cup (\Phi_2(T) \setminus \Psi_2(T))\cup \Psi_2(T)= \Phi_1(T) \cup \Phi_2(T).$ \\
 As $\Phi_1(T) \subset \Psi_1(T),$ we have $\Phi_1(T) \cap \Phi_2(T)= \Phi_1(T) \cap [(\Phi_2(T) \setminus \Psi_2(T)) \cup \Psi_2(T)]= \Phi_1(T) \cap [(\Psi_1(T) \setminus \Phi_1(T)) \cup  (\sigma(T)\setminus \sigma_a(T)) \cup \Psi_2(T)]= \emptyset.$  Therefore
$\sigma(T)= \Phi_1(T)  \bigsqcup \Phi_2(T)$  and $\Phi$ is a spectral partitioning function for $T.$  \end{proof}

Similarly to Theorem  \ref {thm41}  and Corollary  \ref{cor41} we have the following two results.

\begin{thm} \label{thm42} Let $T \in L(X)$ and let $\Phi$ be a spectral a-partitioning  function for $T.$ If $\Psi$ is a spectral valued function such that $\Psi \leq \Phi,$   then $\Psi$ is a spectral partitioning function for  $T$ if and only if $ \Psi_1(T) \setminus \Phi_1(T) = (\Phi_2(T) \setminus \Psi_2(T)) \bigsqcup(\sigma(T)\setminus \sigma_a(T)).$
\end{thm}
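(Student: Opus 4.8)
The plan is to mirror the proof of Theorem \ref{thm41}, simply interchanging the roles played by $\sigma(T)$ and $\sigma_a(T)$: here it is $\Phi$ that partitions the approximate point spectrum while $\Psi$ is the candidate for partitioning the full spectrum. The whole argument is set-theoretic and rests on two facts that hold throughout. First, $\sigma_a(T)\subset\sigma(T)$, so that $\sigma(T)=\sigma_a(T)\bigsqcup(\sigma(T)\setminus\sigma_a(T))$. Second, since $\Phi$ is a spectral a-partitioning function one has $\sigma_a(T)=\Phi_1(T)\bigsqcup\Phi_2(T)$ with $\Phi_1(T),\Phi_2(T)\subset\sigma_a(T)$, and the relation $\Psi\leq\Phi$ gives $\Phi_1(T)\subset\Psi_1(T)$ and $\Psi_2(T)\subset\Phi_2(T)$. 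In particular $\Psi_2(T)\subset\Phi_2(T)\subset\sigma_a(T)$, so $\Psi_2(T)$ is disjoint from $\sigma(T)\setminus\sigma_a(T)$; this single observation is what makes all the disjoint unions below genuinely disjoint.

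For the direct implication I would start from the two decompositions $\sigma(T)=\Phi_1(T)\bigsqcup\Phi_2(T)\bigsqcup(\sigma(T)\setminus\sigma_a(T))$ and, assuming $\Psi$ partitions $\sigma(T)$, $\sigma(T)=\Psi_1(T)\bigsqcup\Psi_2(T)=\Phi_1(T)\bigsqcup(\Psi_1(T)\setminus\Phi_1(T))\bigsqcup\Psi_2(T)$, the latter using $\Phi_1(T)\subset\Psi_1(T)$. Deleting the common piece $\Phi_1(T)$ from both expressions for $\sigma(T)$ yields $\Phi_2(T)\bigsqcup(\sigma(T)\setminus\sigma_a(T))=(\Psi_1(T)\setminus\Phi_1(T))\bigsqcup\Psi_2(T)$. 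Subtracting $\Psi_2(T)$ from both sides, and using that $\Psi_2(T)\subset\Phi_2(T)$ while $\Psi_2(T)$ is disjoint from $\sigma(T)\setminus\sigma_a(T)$ and from $\Psi_1(T)\setminus\Phi_1(T)\subset\Psi_1(T)$, this leaves exactly $\Psi_1(T)\setminus\Phi_1(T)=(\Phi_2(T)\setminus\Psi_2(T))\bigsqcup(\sigma(T)\setminus\sigma_a(T))$.

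For the converse I would assume the displayed set equality and recover $\sigma(T)=\Psi_1(T)\bigsqcup\Psi_2(T)$ in two steps. For the covering, write $\Phi_2(T)=(\Phi_2(T)\setminus\Psi_2(T))\cup\Psi_2(T)$ and substitute into $\sigma(T)=\Phi_1(T)\cup\Phi_2(T)\cup(\sigma(T)\setminus\sigma_a(T))$; since the hypothesis places both $\Phi_2(T)\setminus\Psi_2(T)$ and $\sigma(T)\setminus\sigma_a(T)$ inside $\Psi_1(T)\setminus\Phi_1(T)\subset\Psi_1(T)$, and $\Phi_1(T)\subset\Psi_1(T)$, everything except $\Psi_2(T)$ lands in $\Psi_1(T)$, whence $\sigma(T)\subset\Psi_1(T)\cup\Psi_2(T)$; the reverse inclusion is automatic because $\Psi$ is spectral valued. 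For disjointness, split $\Psi_1(T)=\Phi_1(T)\cup(\Psi_1(T)\setminus\Phi_1(T))$ and intersect with $\Psi_2(T)$: the term $\Phi_1(T)\cap\Psi_2(T)$ vanishes because $\Psi_2(T)\subset\Phi_2(T)$ and $\Phi_1(T)\cap\Phi_2(T)=\emptyset$, while $(\Psi_1(T)\setminus\Phi_1(T))\cap\Psi_2(T)$ vanishes after replacing $\Psi_1(T)\setminus\Phi_1(T)$ by $(\Phi_2(T)\setminus\Psi_2(T))\bigsqcup(\sigma(T)\setminus\sigma_a(T))$, both of whose pieces are disjoint from $\Psi_2(T)$.

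The computations are routine; the only point demanding care, and the step I would treat as the main obstacle, is the bookkeeping of which disjointness relations are actually available at each stage. In particular one must repeatedly invoke $\Psi_2(T)\subset\sigma_a(T)$ to guarantee $\Psi_2(T)\cap(\sigma(T)\setminus\sigma_a(T))=\emptyset$, and $\Phi_1(T)\cap\Phi_2(T)=\emptyset$ coming from the a-partitioning hypothesis. With these in hand, every occurrence of ``$\cup$'' above genuinely upgrades to ``$\bigsqcup$'' and no functional-analytic input is needed.
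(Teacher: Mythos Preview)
Your proof is correct and follows essentially the same route as the paper's own argument: decompose $\sigma(T)$ via the a-partition $\sigma_a(T)=\Phi_1(T)\bigsqcup\Phi_2(T)$ together with $\sigma(T)\setminus\sigma_a(T)$, then use $\Phi_1(T)\subset\Psi_1(T)$ and $\Psi_2(T)\subset\Phi_2(T)$ to pass back and forth between the two decompositions. Your write-up is in fact more explicit than the paper's about the disjointness bookkeeping (in particular the repeated use of $\Psi_2(T)\subset\sigma_a(T)$), but the underlying computation is the same.
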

\begin{proof}

If $\Psi$ is a spectral partitioning function for  $T,$ then $\sigma(T)= \Psi_1(T) \bigsqcup \Psi_2(T) = \Phi_1(T) \cup (\Psi_1(T) \setminus \Phi_1(T))\cup\Psi_2(T).$  Hence  $\Psi_1(T) \setminus \Phi_1(T)=(\Phi_2(T) \setminus \Psi_2(T)) \bigsqcup  (\sigma(T)\setminus \sigma_a(T)).$\\
Conversely assume that $\Phi$ is  a spectral a-partitioning function for $T$ and $ \Psi_1(T) \setminus \Phi_1(T) = (\Phi_2(T) \setminus \Psi_2(T)) \bigsqcup (\sigma(T)\setminus \sigma_a(T)).$ Then $ \sigma(T)= \Phi_1(T)\cup \Phi_2(T)\cup (\sigma(T)\setminus \sigma_a(T)) = \Phi_1(T) \cup (\Phi_2(T)\setminus \Psi_2(T)) \cup \Psi_2(T)  \cup  (\sigma(T)\setminus \sigma_a(T))= \Phi_1(T)  \cup (\Psi_1(T)\setminus \Phi_1(T)) \cup \Psi_2(T). $  Hence $\sigma(T) =\Psi_1(T)  \cup \Psi_2(T).$ Since  $ \Psi_1(T) \setminus \Phi_1(T) = (\Phi_2(T) \setminus \Psi_2(T)) \bigsqcup (\sigma(T)\setminus \sigma_a(T))$ and $\Psi_2(T) \subset \Phi_2(T),$ then $ \Psi_1(T)  \cap \Psi_2(T)= [\Phi_1(T) \cup (\Psi_1(T)\setminus \Phi_1(T)] \cap \Psi_2(T)= \emptyset. $ Hence  $\sigma(T)= \Psi_1(T)  \bigsqcup \Psi_2(T)$  and $\Psi$ is a spectral partitioning function for $T.$  \end{proof}

\begin{cor} \label{cor42} Let $T \in L(X)$ and let $\Psi$ be  a spectral  partitioning function for $T.$ If $\Phi$ is a spectral valued function such that $\Psi \leq \Phi,$   then $\Phi$ is a spectral a-partitioning function for  $T$ if and only if $ \Psi_1(T) \setminus \Phi_1(T) = (\Phi_2(T) \setminus \Psi_2(T)) \bigsqcup (\sigma(T)\setminus \sigma_a(T)).$
\end{cor}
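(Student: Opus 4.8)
The plan is to prove the two implications separately, exploiting the fact that this corollary stands to Theorem \ref{thm42} exactly as Corollary \ref{cor41} stands to Theorem \ref{thm41}: one direction is a verbatim restatement of an implication already established, and only the reverse direction requires a direct computation.

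For the forward implication I would simply invoke Theorem \ref{thm42}. Indeed, assuming that $\Phi$ is a spectral a-partitioning function for $T$ and combining this with the standing hypotheses $\Psi \leq \Phi$ and ``$\Psi$ is a spectral partitioning function for $T$'', we are in exactly the situation covered by the ``only if'' part of Theorem \ref{thm42}, whose conclusion is the required equality $\Psi_1(T)\setminus\Phi_1(T)=(\Phi_2(T)\setminus\Psi_2(T))\bigsqcup(\sigma(T)\setminus\sigma_a(T))$. No further argument is needed; alternatively one can reproduce verbatim the short computation given in the forward half of the proof of Theorem \ref{thm42}.

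The reverse implication is the substantive part. Assuming the equality holds, I want to deduce $\sigma_a(T)=\Phi_1(T)\bigsqcup\Phi_2(T)$. Since $\Phi_1(T)\subset\Psi_1(T)$, I would write $\Psi_1(T)=\Phi_1(T)\cup(\Psi_1(T)\setminus\Phi_1(T))$ and substitute the hypothesized decomposition of $\Psi_1(T)\setminus\Phi_1(T)$; using $\sigma(T)=\Psi_1(T)\bigsqcup\Psi_2(T)$ together with $(\Phi_2(T)\setminus\Psi_2(T))\cup\Psi_2(T)=\Phi_2(T)$ then yields $\sigma(T)=\Phi_1(T)\cup\Phi_2(T)\cup(\sigma(T)\setminus\sigma_a(T))$. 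The key step, and the one I expect to be the only genuinely non-formal point, is to show $\Phi_1(T)\cup\Phi_2(T)\subset\sigma_a(T)$, i.e. that neither part of $\Phi$ leaks into $\sigma(T)\setminus\sigma_a(T)$: for $\Phi_1(T)$ this holds because the hypothesis forces $\sigma(T)\setminus\sigma_a(T)\subset\Psi_1(T)\setminus\Phi_1(T)$, which is disjoint from $\Phi_1(T)$; and for $\Phi_2(T)=(\Phi_2(T)\setminus\Psi_2(T))\cup\Psi_2(T)$, the first piece is disjoint from $\sigma(T)\setminus\sigma_a(T)$ by the disjointness asserted in the hypothesis, while the second piece is disjoint from it because $\sigma(T)\setminus\sigma_a(T)\subset\Psi_1(T)$ and $\Psi_1(T)\cap\Psi_2(T)=\emptyset$.

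Once $\Phi_1(T)\cup\Phi_2(T)\subset\sigma_a(T)$ is in hand, intersecting the decomposition $\sigma(T)=(\Phi_1(T)\cup\Phi_2(T))\cup(\sigma(T)\setminus\sigma_a(T))$ with $\sigma_a(T)$ gives $\sigma_a(T)=\Phi_1(T)\cup\Phi_2(T)$. It then remains to check disjointness $\Phi_1(T)\cap\Phi_2(T)=\emptyset$: intersecting $\Phi_1(T)$ with $(\Phi_2(T)\setminus\Psi_2(T))\cup\Psi_2(T)$, the term $\Phi_1(T)\cap\Psi_2(T)$ vanishes since $\Phi_1(T)\subset\Psi_1(T)$ and $\Psi_1(T)\cap\Psi_2(T)=\emptyset$, while $\Phi_1(T)\cap(\Phi_2(T)\setminus\Psi_2(T))$ vanishes because $\Phi_2(T)\setminus\Psi_2(T)\subset\Psi_1(T)\setminus\Phi_1(T)$. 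This yields $\sigma_a(T)=\Phi_1(T)\bigsqcup\Phi_2(T)$, as required. All of these are routine set-theoretic manipulations of exactly the kind already carried out in the proofs of Theorem \ref{thm41} and Corollary \ref{cor41}.
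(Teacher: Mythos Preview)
Your proposal is correct and follows essentially the same route as the paper's proof: the forward direction is obtained by invoking Theorem~\ref{thm42}, and the reverse direction proceeds by the same chain $\sigma(T)=\Psi_1(T)\cup\Psi_2(T)=\Phi_1(T)\cup(\Psi_1(T)\setminus\Phi_1(T))\cup\Psi_2(T)=\Phi_1(T)\cup\Phi_2(T)\cup(\sigma(T)\setminus\sigma_a(T))$, followed by the containment $\Phi_1(T)\cup\Phi_2(T)\subset\sigma_a(T)$ and the disjointness check. Your treatment is in fact slightly more explicit than the paper's on the step $\Phi_1(T)\cup\Phi_2(T)\subset\sigma_a(T)$, where the paper simply asserts the inclusion from the hypothesis and $\Psi_1(T)\cap\Psi_2(T)=\emptyset$ while you spell out why each of $\Phi_1(T)$, $\Phi_2(T)\setminus\Psi_2(T)$, and $\Psi_2(T)$ avoids $\sigma(T)\setminus\sigma_a(T)$.
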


\begin{proof}

Assume that $\Phi$  is a spectral a-partitioning  function for $T.$ As  $\Psi \leq \Phi,$ and $\Psi$ is  a spectral partitioning function for $T,$ then from  Theorem \ref{thm42}, we have $ \Psi_1(T) \setminus \Phi_1(T) = (\Phi_2(T) \setminus \Psi_2(T)) \bigsqcup (\sigma(T)\setminus \sigma_a(T)).$\\
Conversely assume that $\Psi$ is  a spectral partitioning function for $T$ and $ \Psi_1(T) \setminus \Phi_1(T) = (\Phi_2(T) \setminus \Psi_2(T)) \bigsqcup (\sigma(T)\setminus \sigma_a(T)).$ Then
$ \sigma(T)= \Psi_1(T)\cup \Psi_2(T) = \Phi_1(T) \cup (\Psi_1(T)\setminus \Phi_1(T)) \cup \Psi_2(T)=  \Phi_1(T) \cup  (\Phi_2(T) \setminus \Psi_2(T)) \cup (\sigma(T)\setminus \sigma_a(T)) \cup \Psi_2(T) = \Phi_1(T)  \cup  \Phi_2(T) \cup (\sigma(T)\setminus \sigma_a(T)). $

 Hence $\sigma_a(T) \subset \Phi_1(T)  \cup \Phi_2(T).$ Since  $ \Psi_1(T) \setminus \Phi_1(T) = (\Phi_2(T) \setminus \Psi_2(T)) \bigsqcup (\sigma(T)\setminus \sigma_a(T))$ and $\Psi_2(T) \cap \Psi_1(T)= \emptyset,$ then $ \Phi_1(T)  \cup \Phi_2(T) \subset \sigma_a(T).$ Moreover, we have  $\Phi_1(T) \cap \Phi_2(T)= \Phi_1(T) \cap [(\Phi_2(T) \setminus \Psi_2(T)) \cup \Psi_2(T)] $ and since $\Phi_1(T) \subset \Psi_1(T),$  then  $\Phi_1(T) \cap \Phi_2(T) = \emptyset.$ Therefore $\sigma_a(T)= \Phi_1(T)  \bigsqcup \Phi_2(T)$  and $\Phi$ is a spectral a-partitioning function for $T.$ \end{proof}

Among the spectral valued functions listed in Table~1 and Table~2, we consider the following cases to illustrate  the use of Theorem \ref{thm41} and Theorem \ref{thm42}.

It is shown in \cite[Corollary 2.5 ]{RA} that if $\Psi_{aW}$ is a spectral a-partitioning function for $ T \in L(X)$, then  $\Phi_{W}$  is  a partitioning function for $T.$ When  $\Psi_{aW}$ is a spectral a-partitioning function for $ T,$ then  $ \sigma _W(T) \setminus \sigma_{SF_+^-}(T)=(E_a^0(T \setminus E^0(T))\bigsqcup (\sigma(T) \setminus \sigma_a(T)).$ Since $\Phi_{W}  \leq  \Psi_{aW},$  this result is then a direct consequence of Theorem \ref{thm42}. Moreover, combining Theorem \ref{thm42} and Corollary \ref{cor42},  we have the following theorem, characterizing the equivalence of the two  properties.

\begin{thm}\label{thm43}  Let $ T \in L(X).$ The spectral valued function  $\Psi_{aW}$ is a spectral a-partitioning function for $T$  if and only if  $\Phi_{W}$  is  a partitioning function for $T$ and  $ \sigma _W(T) \setminus \sigma_{SF_+^-}(T)=(E_a^0(T \setminus E^0(T)) \bigsqcup(\sigma(T) \setminus \sigma_a(T)).$
\end{thm}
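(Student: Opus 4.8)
The plan is to read Theorem~\ref{thm43} as the amalgamation of Theorem~\ref{thm42} and Corollary~\ref{cor42} applied to the single pair $\Psi:=\Phi_W$ and $\Phi:=\Psi_{aW}$. First I would record that $\Phi_W\leq\Psi_{aW}$: a Weyl operator is upper semi-Fredholm of index $0\leq 0$, so $\sigma_{SF_+^-}(T)\subset\sigma_W(T)$; and since $\sigma_a(T)\subset\sigma(T)$, every eigenvalue of finite multiplicity isolated in $\sigma(T)$ is a fortiori isolated in $\sigma_a(T)$, whence $E^0(T)\subset E_a^0(T)$. These are exactly the inclusions $(\Psi_{aW})_1(T)\subset(\Phi_W)_1(T)$ and $(\Phi_W)_2(T)\subset(\Psi_{aW})_2(T)$ required by $\Phi_W\leq\Psi_{aW}$. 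Under this identification the set identity common to Theorem~\ref{thm42} and Corollary~\ref{cor42}, namely $\Psi_1(T)\setminus\Phi_1(T)=(\Phi_2(T)\setminus\Psi_2(T))\bigsqcup(\sigma(T)\setminus\sigma_a(T))$, becomes precisely the displayed identity $(\ast)$: $\sigma_W(T)\setminus\sigma_{SF_+^-}(T)=(E_a^0(T)\setminus E^0(T))\bigsqcup(\sigma(T)\setminus\sigma_a(T))$.

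For the implication $(\Leftarrow)$ I would argue straight from Corollary~\ref{cor42}. Assume $\Phi_W$ is a spectral partitioning function for $T$ and that $(\ast)$ holds. Taking $\Psi=\Phi_W$ (partitioning, as the hypothesis of the corollary demands) and $\Phi=\Psi_{aW}$ with $\Phi_W\leq\Psi_{aW}$, Corollary~\ref{cor42} asserts that $\Psi_{aW}$ is a spectral a-partitioning function for $T$ if and only if $(\ast)$ holds; since $(\ast)$ is assumed, $\Psi_{aW}$ is a-partitioning. This direction is immediate, all its content having been pushed into $(\ast)$.

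For the implication $(\Rightarrow)$ the subtlety is that Theorem~\ref{thm42} only delivers a biconditional and so needs an anchor. Assume $\Psi_{aW}$ is a spectral a-partitioning function for $T$. I would first invoke \cite[Corollary~2.5]{RA} to obtain that $\Phi_W$ is a spectral partitioning function for $T$; this supplies the missing half. Then, taking $\Phi=\Psi_{aW}$ (a-partitioning, as the hypothesis of Theorem~\ref{thm42} demands) and $\Psi=\Phi_W$ with $\Phi_W\leq\Psi_{aW}$, Theorem~\ref{thm42} gives that $\Phi_W$ is partitioning if and only if $(\ast)$ holds; since $\Phi_W$ is now known to be partitioning, $(\ast)$ follows. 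Thus both ``$\Phi_W$ partitioning'' and $(\ast)$ hold, completing this direction.

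I expect the main obstacle to be exactly this anchoring in $(\Rightarrow)$: the abstract machinery of Section~4 reduces the whole question to $(\ast)$ but cannot, by itself, decide whether $\Phi_W$ is partitioning. One may either cite the classical implication a-Weyl $\Rightarrow$ Weyl \cite[Corollary~2.5]{RA} as above, or, to keep the argument self-contained, prove $(\ast)$ directly from the hypothesis $\sigma_a(T)=\sigma_{SF_+^-}(T)\bigsqcup E_a^0(T)$, after which Theorem~\ref{thm42} would return ``$\Phi_W$ partitioning'' for free and thereby recover the cited result internally. This direct verification of $(\ast)$ is the genuinely computational step, as it requires the classical relations among $\sigma(T)$, $\sigma_a(T)$, $\sigma_W(T)$ and $\sigma_{SF_+^-}(T)$ at the isolated points involved.
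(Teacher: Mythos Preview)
Your proposal is correct and matches the paper's own argument: the paper states Theorem~\ref{thm43} without a separate proof, remarking instead that it follows by ``combining Theorem~\ref{thm42} and Corollary~\ref{cor42}'' once $\Phi_W\leq\Psi_{aW}$, and it handles the anchoring of the forward direction exactly as you do---by citing \cite[Corollary~2.5]{RA} (equivalently, by asserting the identity $(\ast)$ directly and then reading off $\Phi_W$ partitioning from Theorem~\ref{thm42}). Your write-up simply fills in the details the paper leaves implicit.
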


  It is shown in \cite[Theorem 2.6]{AP} that if $\Psi_{w}$  is   a spectral a-partitioning function for $ T \in L(X)$, then  $\Phi_{B}$  is   a spectral  partitioning function for $T.$ When $\Psi_{w}$  is   a spectral a-partitioning function for $ T,  $ then  $\sigma _W(T) \setminus \sigma_{SF_+^-}(T)=(E^0(T) \setminus \Pi^0(T)) \bigsqcup (\sigma(T) \setminus \sigma_a(T)).$ As Since $\Phi_{B}  \leq  \Psi_{w},$ this result is then a direct consequence of Theorem \ref{thm41}. Moreover, as in Theorem \ref{thm43}, we have the following theorem characterizing the equivalence of the two  properties.

    \begin{thm} \label{thm44} Let $ T \in L(X).$ The spectral valued function  $\Psi_{w}$ is a spectral a-partitioning function for $T$  if and only if  $\Phi_{B}$  is  a partitioning function for $T$ and $ \sigma _W(T) \setminus \sigma_{SF_+^-}(T)=(E^0(T) \setminus \Pi^0(T)) \cup (\sigma(T) \setminus \sigma_a(T)).$

    \end{thm}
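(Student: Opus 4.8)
The plan is to follow exactly the pattern already used for Theorem~\ref{thm43}, now reading $\Psi_w$ as the candidate a-partitioning function and $\Phi_B$ as the candidate partitioning function, and then to invoke the general equivalences Theorem~\ref{thm42} and Corollary~\ref{cor42} together with the known implication of \cite[Theorem 2.6]{AP}. First I would record the order relation: since $\sigma_{SF_+^-}(T)\subseteq\sigma_W(T)$ and $\Pi^0(T)\subseteq E^0(T)$ hold for every $T\in L(X)$, we have $\Phi_B\leq\Psi_w$. Putting $\Psi=\Phi_B$ and $\Phi=\Psi_w$, so that $\Psi_1=\sigma_W$, $\Phi_1=\sigma_{SF_+^-}$, $\Phi_2=E^0$ and $\Psi_2=\Pi^0$, the displayed condition of the statement is precisely the set equation $\Psi_1(T)\setminus\Phi_1(T)=(\Phi_2(T)\setminus\Psi_2(T))\bigsqcup(\sigma(T)\setminus\sigma_a(T))$ occurring in Theorem~\ref{thm42} and Corollary~\ref{cor42}; note that the union is automatically disjoint, because $E^0(T)\subseteq\sigma_a(T)$ forces $E^0(T)\setminus\Pi^0(T)\subseteq\sigma_a(T)$, which is disjoint from $\sigma(T)\setminus\sigma_a(T)$, so writing $\cup$ or $\bigsqcup$ makes no difference here.

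For the forward implication I would assume that $\Psi_w$ is a spectral a-partitioning function for $T$. The single external ingredient is \cite[Theorem 2.6]{AP}, which tells us that $\Phi_B$ is then a spectral partitioning function for $T$. Since $\Phi=\Psi_w$ is a-partitioning, $\Psi=\Phi_B$ is partitioning, and $\Psi\leq\Phi$, the ``only if'' direction of Theorem~\ref{thm42} turns the statement ``$\Phi_B$ is partitioning'' into the required set equation, completing this half.

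For the reverse implication I would assume that $\Phi_B$ is a spectral partitioning function for $T$ and that the set equation holds. Here $\Psi=\Phi_B$ is partitioning, $\Psi\leq\Phi=\Psi_w$, and the set equation is exactly the hypothesis of the ``if'' direction of Corollary~\ref{cor42}, so that corollary immediately delivers that $\Phi=\Psi_w$ is a spectral a-partitioning function for $T$. The only place where anything beyond the general machinery of Section~4 enters is the implication ``$\Psi_w$ a-partitioning $\Rightarrow$ $\Phi_B$ partitioning'' furnished by \cite[Theorem 2.6]{AP}; everything else is a direct substitution into Theorem~\ref{thm42} and Corollary~\ref{cor42}, so the only thing to be careful about is matching the four indices $\Psi_1,\Psi_2,\Phi_1,\Phi_2$ to the correct spectral sets and verifying the order relation $\Phi_B\leq\Psi_w$.
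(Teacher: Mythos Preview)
Your proposal is correct and follows essentially the same approach as the paper: the paper states that Theorem~\ref{thm44} is obtained ``as in Theorem~\ref{thm43}'', namely by combining Theorem~\ref{thm42} and Corollary~\ref{cor42} after observing $\Phi_B\leq\Psi_w$, and invoking \cite[Theorem~2.6]{AP} for the forward implication---exactly what you do. Your added remark that the union is automatically disjoint (since $E^0(T)\subseteq\sigma_a(T)$) is a helpful clarification that the paper leaves implicit.
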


\begin{ex} Let us  consider the operator $T$ of Remark \ref{rema21}, and the two spectral valued  functions defined by
$\Phi_{gaw}(T)=( \Phi_1(T), \Phi_2(T))= (\sigma_{BW}, E_a(T))$  and
 $\Phi_{Baw}(T)=(\Phi'_1(T), \Phi'_2(T))=  (\sigma_{BW}(T), E_a^0(T)).$ Then  $ \Phi_{Baw} \leq  \Phi_{gaw} $ and
$\Phi_{Baw}$  is a spectral a-partitioning function for $T.$  Since we have
$ \Phi_2(T) \setminus \Phi'_2(T) = \{0\},$ and $(\Phi'_1(T) \setminus \Phi_1(T)) \cup  (\sigma(T)\setminus \sigma_a(T))= \emptyset,$   then from Corollary \ref{cor41}, $\Phi_{gaw}$ is not a spectral partitioning function for $T,$ which is readily verified.

\end{ex}

For the study of the spectral valued functions, we have considered  comparable spectral valued functions for the order relation $\leq.$ This not always the case, as seen by the  spectral valued functions  $\Psi_{gb}$ and $\Phi_{gab},$ defined by $\Psi_{gb}(T)=(\sigma_{SBF_+^-}(T),\Pi(T))$ and $\Phi_{gab}(T)=(\sigma_{BW}(T),\Pi_a(T)),$ for all $T \in L(X).$ We observe that  In fact $ \Psi_{gb}$ and $ \Phi_{gab}$  are comparable for  the order relation $<<$, in the sense that $\sigma_{SBF_+^-}(T) \subset \sigma_{BW}(T),$ and $\Pi(T) \subset \Pi_a(T).$ To deal with such  cases, we have the following two results

\begin{thm}\label{thm45} Let $T \in L(X)$ and let $\Phi$ be a spectral partitioning  function for $T.$ If $\Psi$ is a spectral valued function such that $\Psi <<  \Phi. $   Then $\Psi$ is a spectral a-partitioning function for  $T$ if and only if $(\Phi_1(T) \setminus \Psi_1(T)) \bigsqcup (\Phi_2(T) \setminus \Psi_2(T))= \sigma(T)\setminus \sigma_a(T).$
\end{thm}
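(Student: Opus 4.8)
The plan is to reduce the whole statement to a single disjoint decomposition of $\sigma(T)$, after which both implications fall out by elementary set manipulation. First I would record the structural consequences of the hypotheses. Since $\Psi << \Phi$, we have $\Psi_1(T) \subset \Phi_1(T)$ and $\Psi_2(T) \subset \Phi_2(T)$; and because $\Phi$ is a spectral partitioning function for $T$, the sets $\Phi_1(T)$ and $\Phi_2(T)$ are disjoint. Consequently $\Psi_1(T)$ and $\Psi_2(T)$ are disjoint as well, their intersection being contained in $\Phi_1(T) \cap \Phi_2(T) = \emptyset$. In particular the union $\Psi_1(T) \cup \Psi_2(T)$ is automatically disjoint, so that "$\Psi$ is a spectral a-partitioning function for $T$" is equivalent to the single set equality $\sigma_a(T) = \Psi_1(T) \cup \Psi_2(T)$, and I need not worry separately about disjointness on the $\Psi$-side.

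Next I would establish the key identity. Writing $\Phi_i(T) = \Psi_i(T) \bigsqcup (\Phi_i(T) \setminus \Psi_i(T))$ for $i = 1,2$ and using $\sigma(T) = \Phi_1(T) \bigsqcup \Phi_2(T)$, I obtain
\[
\sigma(T) = \big(\Psi_1(T) \cup \Psi_2(T)\big) \bigsqcup \big((\Phi_1(T) \setminus \Psi_1(T)) \cup (\Phi_2(T) \setminus \Psi_2(T))\big).
\]
All the disjointness checks here reduce to $\Phi_1(T) \cap \Phi_2(T) = \emptyset$ together with the trivial disjointness of each $\Psi_i(T)$ from $\Phi_i(T) \setminus \Psi_i(T)$; for instance the cross term $\Psi_1(T) \cap (\Phi_2(T) \setminus \Psi_2(T))$ sits inside $\Phi_1(T) \cap \Phi_2(T) = \emptyset$. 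I would then abbreviate $A = \Psi_1(T) \cup \Psi_2(T)$ and $B = (\Phi_1(T) \setminus \Psi_1(T)) \cup (\Phi_2(T) \setminus \Psi_2(T))$, so the identity reads $\sigma(T) = A \bigsqcup B$.

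With this decomposition in hand I would read off both directions. For the forward implication, if $\Psi$ is a spectral a-partitioning function then $\sigma_a(T) = A$, and since $\sigma_a(T) \subset \sigma(T) = A \bigsqcup B$, subtracting gives $\sigma(T) \setminus \sigma_a(T) = B$, which is precisely the asserted equality. Conversely, if $B = \sigma(T) \setminus \sigma_a(T)$, then removing $B$ from $\sigma(T) = A \bigsqcup B$ yields $A = \sigma(T) \setminus B = \sigma(T) \cap \sigma_a(T) = \sigma_a(T)$, the last step using $\sigma_a(T) \subset \sigma(T)$; hence $\sigma_a(T) = \Psi_1(T) \bigsqcup \Psi_2(T)$ and $\Psi$ is a spectral a-partitioning function for $T$.

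There is no genuine obstacle here: once the decomposition $\sigma(T) = A \bigsqcup B$ is in place, the equivalence is pure set theory, since $A$ and $B$ are complementary inside $\sigma(T)$ and $\sigma_a(T)$ is always a subset of $\sigma(T)$. The only point requiring a little care is the bookkeeping of disjointness, and in particular the verification that $A$ and $B$ are disjoint; this rests entirely on the fact that a partitioning $\Phi$ forces $\Phi_1(T) \cap \Phi_2(T) = \emptyset$, which makes every cross term vanish. This is exactly the same mechanism that drives Theorem \ref{thm41} and Theorem \ref{thm42}, the difference being that the relation $<<$ places $\Psi_1(T)$ inside $\Phi_1(T)$ rather than the reverse, which is what shifts the set $\sigma(T) \setminus \sigma_a(T)$ to the right-hand side of the characterizing equality.
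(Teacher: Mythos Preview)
Your proof is correct and follows essentially the same route as the paper's: both arguments rest on the disjoint decomposition $\sigma(T) = \big(\Psi_1(T)\cup\Psi_2(T)\big) \bigsqcup \big((\Phi_1(T)\setminus\Psi_1(T))\cup(\Phi_2(T)\setminus\Psi_2(T))\big)$, obtained from $\Psi<<\Phi$ and the partitioning property of $\Phi$, and then read off the equivalence by identifying the two pieces with $\sigma_a(T)$ and $\sigma(T)\setminus\sigma_a(T)$. Your presentation is a bit more streamlined in that you establish the decomposition once up front and derive both implications from it, whereas the paper interleaves the hypothesis of each direction with the decomposition, but the underlying idea and the disjointness bookkeeping are identical.
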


\begin{proof} Since $\Phi$ is  a spectral partitioning  function for $T,$    then  $\sigma(T)= \Phi_1(T) \bigsqcup \Phi_2(T).$  If $\Psi$ is a spectral a-partitioning function for  $T,$  then $ \sigma(T)=  \Phi_1(T) \bigsqcup \Phi_2(T)=  [\Psi_1(T) \cup \Psi_2(T)] \bigsqcup [(\sigma(T)\setminus \sigma_a(T))]= \Psi_1(T) \cup   (\Phi_1(T) \setminus \Psi_1(T)) \cup \Psi_2(T) \cup (\Phi_2(T) \setminus \Psi_2(T)).$ As  $ (\Phi_1(T) \setminus \Psi_1(T)) \cap (\Phi_2(T) \setminus \Psi_2(T)= \emptyset,$ then $ \sigma(T)\setminus \sigma_a(T)= (\Phi_1(T) \setminus \Psi_1(T)) \bigsqcup (\Phi_2(T) \setminus \Psi_2(T).$\\
Conversely assume that $ \sigma(T)\setminus \sigma_a(T)= (\Phi_1(T) \setminus \Psi_1(T)) \bigsqcup (\Psi_2(T) \setminus \Psi_2(T)).$  As  $\sigma(T)= \Phi_1(T) \bigsqcup \Phi_2(T),$  then $\sigma_a(T)= \Psi_1(T) \cup \Psi_2(T).$ As we have obviously $ \Psi_1(T) \cap \Psi_2(T)= \emptyset, $  then $\sigma_a(T)= \Psi_1(T) \bigsqcup \Psi_2(T),$  and $\Psi$ is a spectral a-partitioning function for  $T.$ \end{proof}

Similarly to Theorem \ref{thm45}, we have the following result, which we give without proof.

\begin{thm}\label{thm46} Let $T \in L(X)$ and let $\Psi$ be a spectral a-partitioning  function for $T.$ If $\Phi$ is a spectral valued function such that $\Psi <<  \Phi.$   Then $\Phi$ is a spectral partitioning function for  $T$ if and only if $ \sigma(T)\setminus \sigma_a(T)= (\Phi_1(T) \setminus\Psi_1(T)) \bigsqcup (\Phi_2(T) \setminus \Psi_2(T)).$
\end{thm}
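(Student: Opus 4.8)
Theorem \ref{thm46} is stated as the "similarly proved" analogue of Theorem \ref{thm45}, so my plan is to mirror that proof, with the roles of the spectrum and the approximate spectrum exchanged: here $\Psi$ is the a-partitioning function (so $\sigma_a(T)=\Psi_1(T)\bigsqcup\Psi_2(T)$ is given), and we are asking when $\Phi$ partitions the full spectrum $\sigma(T)$. The hypothesis $\Psi\ll\Phi$ means $\Psi_1(T)\subset\Phi_1(T)$ and $\Psi_2(T)\subset\Phi_2(T)$, which is exactly what lets me write each $\Phi_i(T)$ as the disjoint union $\Psi_i(T)\cup(\Phi_i(T)\setminus\Psi_i(T))$ and keep track of the pieces.

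First I would prove the forward direction. Assume $\Phi$ is a spectral partitioning function for $T$, so $\sigma(T)=\Phi_1(T)\bigsqcup\Phi_2(T)$. Since $\Psi$ is an a-partitioning function, $\sigma_a(T)=\Psi_1(T)\bigsqcup\Psi_2(T)$. Using $\Psi\ll\Phi$ I would expand
$$\sigma(T)=\Phi_1(T)\bigsqcup\Phi_2(T)=\Psi_1(T)\cup(\Phi_1(T)\setminus\Psi_1(T))\cup\Psi_2(T)\cup(\Phi_2(T)\setminus\Psi_2(T)),$$
and recognize $\Psi_1(T)\cup\Psi_2(T)=\sigma_a(T)$. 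Because $\sigma_a(T)\subset\sigma(T)$ always, subtracting $\sigma_a(T)$ leaves exactly $\sigma(T)\setminus\sigma_a(T)=(\Phi_1(T)\setminus\Psi_1(T))\cup(\Phi_2(T)\setminus\Psi_2(T))$; and since $\Phi_1(T)\cap\Phi_2(T)=\emptyset$ the two leftover sets are disjoint, giving the claimed disjoint union. The one point requiring a moment's care is verifying that the four sets in the expansion recombine cleanly, i.e.\ that removing $\Psi_1(T)\cup\Psi_2(T)$ from $\sigma(T)$ does not accidentally remove parts of the $\Phi_i(T)\setminus\Psi_i(T)$ pieces; this follows from disjointness of $\Phi_1$ and $\Phi_2$ together with $\Psi_i\subset\Phi_i$.

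For the converse, assume $\sigma(T)\setminus\sigma_a(T)=(\Phi_1(T)\setminus\Psi_1(T))\bigsqcup(\Phi_2(T)\setminus\Psi_2(T))$. I would start from $\sigma(T)=\sigma_a(T)\bigsqcup(\sigma(T)\setminus\sigma_a(T))$, substitute $\sigma_a(T)=\Psi_1(T)\bigsqcup\Psi_2(T)$ for the first block and the hypothesized decomposition for the second, and then reassemble: $\Psi_1(T)\cup(\Phi_1(T)\setminus\Psi_1(T))=\Phi_1(T)$ and $\Psi_2(T)\cup(\Phi_2(T)\setminus\Psi_2(T))=\Phi_2(T)$, so that $\sigma(T)=\Phi_1(T)\cup\Phi_2(T)$. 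It remains to show this union is disjoint. Here I would argue that $\Phi_1(T)\cap\Phi_2(T)$ decomposes along the blocks as $[\Psi_1(T)\cap\Psi_2(T)]$ together with cross terms involving the differences, and that each piece is empty: $\Psi_1(T)\cap\Psi_2(T)=\emptyset$ since $\Psi$ a-partitions, while the cross terms vanish because $(\Phi_1(T)\setminus\Psi_1(T))$ and $(\Phi_2(T)\setminus\Psi_2(T))$ are disjoint by hypothesis and each is disjoint from the opposite $\Psi_j(T)$ (both differences lie in $\sigma(T)\setminus\sigma_a(T)$, whereas $\Psi_j(T)\subset\sigma_a(T)$).

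The main obstacle—such as it is—is purely bookkeeping: keeping the disjointness claims straight when four sets are in play. The key structural facts doing all the work are that $\sigma_a(T)\subset\sigma(T)$ (so the two blocks $\sigma_a(T)$ and $\sigma(T)\setminus\sigma_a(T)$ genuinely partition $\sigma(T)$), and that the differences $\Phi_i(T)\setminus\Psi_i(T)$ live inside $\sigma(T)\setminus\sigma_a(T)$ once the hypothesis is granted. I expect no genuine difficulty beyond this, which is presumably why the authors chose to state Theorem \ref{thm46} without proof, as the mirror image of Theorem \ref{thm45}.
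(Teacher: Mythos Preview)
Your proposal is correct and is precisely the expected mirror of the proof of Theorem~\ref{thm45}; indeed the paper omits the proof of Theorem~\ref{thm46} entirely, stating only that it is proved ``similarly,'' and your argument is exactly that symmetric version. Your handling of the disjointness of $\Phi_1(T)$ and $\Phi_2(T)$ in the converse direction (splitting into four pieces and using that the differences lie in $\sigma(T)\setminus\sigma_a(T)$ while the $\Psi_j(T)$ lie in $\sigma_a(T)$) is in fact somewhat more explicit than the corresponding step in the paper's proof of Theorem~\ref{thm45}.
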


We observe that in Theorem \ref{thm46}, $\Phi$ is a spectral partitioning function for  $T$ if and only if the function $\Phi \setminus \Psi$ defined by $(\Phi \setminus  \Psi)(T) =  ((\Phi_1(T) \setminus\Psi_1(T)) , (\Phi_2(T) \setminus \Psi_2(T)) , \, \forall T \in L(X)$  is partitioning for the complement of the approximate spectrum $\sigma(T)\setminus \sigma_a(T).$

\goodbreak

 {\footnotesize
\noindent Mohammed Berkani,\\
\noindent Department of mathematics,\\
\noindent Science Faculty of Oujda,\\
\noindent University Mohammed I,\\
\noindent Operator Theory Team, SFO,\\
\noindent Morocco\\
\noindent berkanimo@aim.com\\}
\end{document}